\DeclareFontFamily{U}{rsfs}{\skewchar\font"7F}
\DeclareFontShape{U}{rsfs}{m}{n}{
	<-6> rsfs5
	<6-8> rsfs7
	<8-> rsfs10
	}{}
\DeclareMathAlphabet{\mathscr}{U}{rsfs}{m}{n}
\newcommand{\CF}{{\mathcal F}}
\newcommand{\C}{{\mathbb C}}
\newcommand{\R}{{\mathbb R}}
\newcommand{\Z}{{\mathbb Z}}
\newcommand{\DBott}{\mathrm{DBott}}
\newcommand{\DGV}{\mathrm{DGV}}
\newcommand{\FLK}{\mathrm{FLK}}
\DeclareMathOperator{\tr}{tr}
\newcommand{\gl}{\mathfrak{gl}}
\newcommand{\GL}{\mathrm{GL}}
\newcommand{\vG}{\varGamma}
\newcommand{\pdif}[2]{\dfrac{\partial#1}{\partial#2}}
\newcommand{\x}{\times}
\DeclareMathOperator{\disj}{\mathchoice{\coprod}{\textstyle\amalg}{\scriptstyle\amalg}{\scriptscriptstyle\amalg}}
 \newtheorem{theorem}{Theorem}[section]
 \newtheorem{proposition}[theorem]{Proposition}
 \newtheorem{lemma}[theorem]{Lemma}
\theoremstyle{definition}
 \newtheorem{definition}[theorem]{Definition}
 \newtheorem{example}[theorem]{Example}
\theoremstyle{remark}
 \newtheorem{remark}[theorem]{Remark}
\title{On a characteristic class associated\\ with deformations of foliations}
\author{Taro Asuke}
\address{Graduate School of Mathematical Sciences, University of Tokyo, 3-8-1 Komaba, Meguro-ku, Tokyo 153-8914, Japan}
\email{asuke@ms.u-tokyo.ac.jp}
\subjclass[2010]{Primary 57R30; Secondary 58H10, 37F75}
\keywords{foliations, deformations, characteristic classes}
\date{November 9, 2020}%\today
\begin{document}
\thispagestyle{plain}
\begin{abstract}
A characteristic class for deformations of foliations called the Fuks--Lodder--Kotschick class (FLK class for short) is studied.
It seems unknown if there is a real foliation with non-trivial FLK class.
In this article, we show some conditions to assure the triviality of the FLK class.
On the other hand, we show that the FLK class is easily to be non-trivial for transversely holomorphic foliations.
We present an example and give a construction which generalizes it.
\end{abstract}
\maketitle
\setlength{\baselineskip}{16.5pt}
%%%%%%%%%%%%%%%%%%%%%%%%%%%%%%%%%%%%%%%%%%%%%%%%%%%%%%%
%\section{Introduction}

%%%%%%%%%%%%%%%%%%%%%%%%%%%%%%%%%%%%%%%%%%%%%%%%%%%%%%%
\section*{Introduction}
It is well-known that secondary characteristic classes for foliations such as the Godbillon--Vey class vary under deformations of foliations~\cite{Thurston}, \cite{14}.
By taking derivatives of such classes with respect to differentiable one-parameter families, we can define characteristic classes for deformations of foliations~\cite{13}.
Beside these derivatives, given a deformation of a foliation, we can define a characteristic class which we call the Fuks--Lodder--Kotschick class (FLK class for short).
It is the most fundamental class which is \textit{not} the derivative of a secondary characteristic class.
Although the FLK class is non-trivial in the DGA associated with deformations of foliations~\cite{asuke:2015-2}, it seems unknown if there is a foliation with non-trivial FLK class in the real category.
Indeed, several conditions which assures the triviality of the FLK class are known~\cite{Kotschick}.
On the other hand, a non-trivial example is known in the transversely holomorphic setting (Example~\ref{ex3.2}), where the non-triviality is derived from the framings of the normal bundle.
In this article, we will show the following.
Relevant definitions will be given in Section~1.

\theoremstyle{plain}
\newtheorem*{TheoremA}{Theorem A}
\begin{TheoremA}
Let $\CF$ be a transversely projective foliation with trivial canonical bundle.
Then the FLK class of $\CF$ with respect to any infinitesimal deformation is trivial.
\end{TheoremA}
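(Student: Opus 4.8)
\emph{Plan.} I would begin by unwinding the definition of the \FLK\ class given in Section~1: it is the image, under the homomorphism determined by $\CF$ and the prescribed infinitesimal deformation, of a distinguished cohomology class of the deformation DGA that is \emph{not} a derivative of a secondary class. After fixing a Bott connection $\nabla$ on the normal bundle $\nu\CF$ and a variation $\dot\nabla$ encoding the infinitesimal deformation, the \FLK\ class is represented by an explicit universal polynomial in the connection and curvature forms of $\nabla$, their transgressions, and their first variations. The point of the proof is to choose \emph{all} of these data compatibly with the transverse projective structure and then to use triviality of the canonical bundle to normalise the trace ($c_{1}$-) part to zero; the resulting representative should be visibly exact, and since nothing in the argument constrains the deformation, the conclusion will hold for every one.

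\emph{Reduction of the structure group.} A transverse projective structure on $\CF$ is, after passing to a suitable principal bundle over $M$ (equivalently, to a good cover carrying a $\mathrm{PGL}(q+1,\R)$-valued transverse cocycle), the same as a Cartan connection with values in $\mathfrak{sl}(q+1,\R)$ whose curvature annihilates $T\CF$; from this datum one recovers $\nu\CF$ together with a preferred Bott connection $\nabla$ whose curvature $\Omega$ is a universal expression in the transversely flat Cartan curvature. Triviality of the canonical bundle then does two things. First, because $c_{1}$ (in the real case, $w_{1}$) is the only obstruction, it allows the $\mathrm{PGL}(q+1,\R)$-cocycle to be lifted to an $\mathrm{SL}^{\pm}(q+1,\R)$-cocycle, so that $\nu\CF$ is exhibited in a (relative) Euler-type sequence built from a transversely flat rank-$(q+1)$ bundle $\widetilde E$. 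Second, a trivialisation of $\Lambda^{q}\nu\CF$ furnishes a flat connection on it, so $\nabla$ may be taken with $\tr\Omega=0$. Combining the two, every Chern (respectively Pontryagin) form of $(\nu\CF,\nabla)$ becomes exact with a \emph{globally defined} primitive extracted from the flat structure on $\widetilde E$, and $c_{1}\equiv 0$ identically.

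\emph{Conclusion.} Substituting these normalised data into the representative of the \FLK\ class: the curvature (Chern/Pontryagin) factors are now exact and carry global primitives, $c_{1}$ vanishes on the nose, and the transgression forms $h_{j}$ are pinned down by the flat $\mathrm{SL}^{\pm}(q+1,\R)$-structure. I expect the representative to collapse to $d$ of a globally defined form, so the \FLK\ class is zero in $H^{*}(M)$, for every infinitesimal deformation.

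\emph{The main obstacle.} The delicate step is the interaction of the \emph{deformation} generators with the projective structure. Since the statement concerns arbitrary infinitesimal deformations --- not only those tangent to the space of transversely projective foliations --- the variation $\dot\nabla$ need not satisfy the $\mathfrak{sl}(q+1,\R)$ structure equations, and one cannot simply say ``the deformed datum is again projective.'' One must instead verify that the precise combination of terms defining the \FLK\ representative is arranged so that its deformation-linear part, \emph{together with} the now-exact curvature forms, still assembles into an exact form. Concretely, I would try to show that the \FLK\ generator lies in the ideal generated by (the class of) $c_{1}$ in the deformation DGA, modulo derivatives of secondary classes, so that $c_{1}\equiv 0$ forces exactness regardless of $\dot\nabla$. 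Establishing this structural position of the \FLK\ class inside the deformation DGA is, I expect, the crux; the remainder is bookkeeping with Chern--Simons forms.
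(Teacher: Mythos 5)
Your argument breaks at two decisive points. First, your reduction treats the transverse projective structure as a \emph{flat} one: you pass to a $\mathrm{PGL}(q+1)$-valued transverse cocycle and a transversely flat rank-$(q+1)$ bundle, i.e.\ you develop the structure into projective space. Theorem~A concerns general (not necessarily flat) transverse projective structures --- the paper stresses that ``projective structures are not necessarily flat'', and the whole point of the statement is that it generalizes Kotschick's result, which was for flat (homographic) structures in codimension one. Second, the normalisation ``a trivialisation of $K_\CF^{-1}$ furnishes a flat connection, so $\nabla$ may be taken with $\tr\Omega=0$'' is not available: every form entering the FLK representative $c^{q+2}\dot\theta\wedge\theta\wedge(d\theta)^q$ must be computed from a \emph{Bott} connection, and a Bott connection with flat determinant does not exist in general. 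If it did, the same substitution would make $\theta=0$ and kill the Godbillon--Vey (resp.\ Bott) class of every foliation with trivial canonical bundle, which is false already for transversely homographic examples and for Example~3.2 of the paper. The content of $\theta^e$ is precisely the discrepancy between the Bott connection and the flat connection determined by the trivialisation $e$; you cannot normalise it away.

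Your proposed crux --- that the FLK generator lies in the ideal generated by $c_1$ in the deformation DGA, so that triviality of $K_\CF$ alone forces exactness ``regardless of $\dot\nabla$'' --- cannot be the right mechanism either: Proposition~3.1 and Example~3.2 of the paper exhibit transversely holomorphic foliations with trivial canonical bundle and non-trivial $\FLK_{\mu}(\CF^\circ;e_m)$, and Theorem~A is asserted in the transversely holomorphic category as well, so projectiveness must enter beyond $c_1=0$. The paper's actual route makes it enter through a transverse Thomas--Whitehead (projective) connection: one first shows (Lemma~2.1) that $\theta^e\wedge$ defines a map $H^1(M;\Theta_\CF)\to H^2(M;\Theta_\CF)$, then (Lemma~2.7) that the FLK representative equals, up to a constant, $\mathcal{L}_P(\theta^e\wedge[\dot\omega])$, where $\mathcal{L}_P\colon H^r_\CF(M;Q(\CF))\to H^{2q+r}(M)$ is the operator of Proposition~2.5 built from the TW-connection; that operator vanishes identically when $\CF$ is transversely projective, which gives Theorem~A (and, since the argument factors through $H^2(M;\Theta_\CF)$, also Theorem~B). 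Your sketch contains no substitute for this factorisation, and the steps you do propose would not survive the non-flat case nor the constraint that the connection be a Bott connection.
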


Theorem~A is a generalization of~\cite{Kotschick}*{Corollary~3.3}, where codimension-one foliations with transverse flat projective structures (called \textit{transverse homographic structures}) are considered.

\newtheorem*{TheoremB}{Theorem B}
\begin{TheoremB}
Let $\CF$ be a foliation with trivial canonical bundle.
Then the FLK class of $\CF$ is trivial for any infinitesimal deformations of $\CF$ if $H^1(M;\Theta_\CF)=\{0\}$ or $H^2(M;\Theta_\CF)=\{0\}$.
\end{TheoremB}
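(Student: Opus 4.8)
The plan is to argue with the explicit cochain representative of the FLK class introduced in Section~1. Represent the given infinitesimal deformation by a $d_\CF$-cocycle $v$ in the complex $\bigl(\Omega^\bullet(\CF;Q),d_\CF\bigr)$ of leafwise forms with values in the normal bundle $Q=TM/T\CF$, whose cohomology is $H^\bullet(M;\Theta_\CF)$, and use the triviality of the canonical bundle $K_\CF$ to fix a nowhere-vanishing transverse volume form $\varpi$; from $v$, $\varpi$ and a Bott connection on $Q$ one produces a closed form $\Phi$ on $M$ whose de~Rham class is the FLK class of $\CF$ with respect to $v$. The preliminary step is the routine check that $\Phi$ changes only by an exact form when $v$ is altered by a $d_\CF$-coboundary or $\varpi$ by a positive factor, so that the FLK class depends solely on $[v]\in H^1(M;\Theta_\CF)$. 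This settles the case $H^1(M;\Theta_\CF)=\{0\}$ immediately: there every infinitesimal deformation is cohomologous to the zero deformation, whose FLK class is trivial by construction.

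For the case $H^2(M;\Theta_\CF)=\{0\}$ the goal is to exhibit a global primitive of $\Phi$. I would do this by the transgression/descent technique familiar from the theory of secondary classes: filter $\Omega^\bullet(M)$ by the degree in the conormal directions and build a primitive component by component, at each level reducing to a solvable equation by means of the Bott connection (which is flat along the leaves) and the Bianchi identity, the transverse volume form $\varpi$ being used to rewrite the $c_1$-terms that occur---they are exact, since $K_\CF$ trivial forces $c_1(Q)=0$. The construction runs to completion save for one residual obstruction, namely the $d_\CF$-cohomology class of a $2$-cocycle $\xi=\xi(v)\in\Omega^2(\CF;Q)$ built from $v$ (quadratically; one expects $[\xi]$ to be, up to a universal constant, the primary integrability obstruction $\tfrac12[v,v]$, though this need not be verified). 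As $H^2(M;\Theta_\CF)=\{0\}$, we may write $\xi=d_\CF\eta$, and substituting $\eta$ back into the descent finishes the primitive; hence $\Phi$ is exact and the FLK class vanishes.

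The main obstacle is exactly this descent: one must unwind the definition of $\Phi$, track at each transverse level which cohomology group obstructs continuing, and verify that the only one not killed by the hypotheses on $\CF$ itself is $H^2(M;\Theta_\CF)$. The role of the hypothesis on $K_\CF$ is essential and twofold: it supplies the transverse volume form $\varpi$ that trivializes the $c_1$-terms, and, more importantly, it is what lets one interpret the surviving transverse differential forms in $\Phi$ as $Q$-valued leafwise cochains, i.e.\ as elements of the very complex computing $H^\bullet(M;\Theta_\CF)$; without it neither this identification nor the reduction of the obstruction to a single class in $H^2(M;\Theta_\CF)$ is available.
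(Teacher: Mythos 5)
Your first case is fine: since the FLK class depends only on the class $\mu\in H^1(M;\Theta_\CF)$ (and on $e$), the hypothesis $H^1(M;\Theta_\CF)=\{0\}$ makes it the FLK class of the zero deformation. But the whole content of Theorem~B is the case $H^2(M;\Theta_\CF)=\{0\}$, and there your argument is only a programme, not a proof. The ``descent'' that is supposed to produce a primitive of the FLK representative, with all intermediate obstructions killed except a single class in $H^2(M;\Theta_\CF)$, is exactly the missing step, and you acknowledge it is the main obstacle without carrying it out. The paper supplies this content by a concrete factorization: Lemma~\ref{lem2.1} defines a map $\theta^e\wedge\colon H^1(M;\Theta_\CF)\to H^2(M;\Theta_\CF)$ (multiplication by the global connection form of a Bott connection on $K_\CF^{-1}$ with respect to the trivialization $e$), Proposition~\ref{prop3.4} provides the map $\mathcal{L}_P\colon H^r_\CF(M;Q(\CF))\to H^{2q+r}(M)$ built from a transverse Thomas--Whitehead projective connection, and Lemma~\ref{thm3.3} shows that $\mathcal{L}_P(\theta^e\wedge[\dot\omega])=\frac1q\,\dot\theta\wedge\theta^e\wedge(d\theta)^q$, i.e.\ the FLK class is, up to a constant, the image of the specific class $\theta^e\wedge\mu\in H^2(M;\Theta_\CF)$. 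With that identity both vanishing hypotheses give the theorem at once; without it (or an equivalent completed construction) your claim that ``the only surviving obstruction lies in $H^2(M;\Theta_\CF)$'' is unsupported.

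Moreover, your guess at what that obstruction is points in the wrong direction. You expect a class quadratic in the deformation, essentially the Kodaira primary integrability obstruction $\tfrac12[v,v]$. The class that actually controls the FLK class is \emph{linear} in $\mu$ and depends on the homotopy class of the trivialization $e$, namely $\theta^e\wedge\mu$; the paper's remark after Lemma~\ref{lem2.1} explicitly notes that this class is different from Kodaira's primary obstruction. The examples make this concrete: in Proposition~\ref{prop3.1} and Example~\ref{ex3.2} the infinitesimal deformation $\mu^\circ$ on $S^3\times S^1$ comes from an honest one-parameter family, so its integrability obstruction vanishes, yet $\FLK_{\mu^\circ}(\CF^\circ;e_m)\neq0$ for $m\neq0$; also the answer changes with the homotopy class $e_m$, which a trivialization-independent obstruction such as $\tfrac12[v,v]$ cannot detect. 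So if your descent terminated in the class you propose, it would prove triviality of the FLK class in situations where it is in fact nontrivial; completing your approach would require identifying the correct ($e$-dependent, linear) class, which is in essence rediscovering the paper's Lemmas~\ref{lem2.1} and~\ref{thm3.3}.
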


Theorems A and B are also valid for transversely holomorphic foliations, where the FLK class depends on the trivializations of the canonical bundle.
If a real foliations is considered, then we may assume that the canonical bundle is trivial by taking a double covering of $M$.
On the other hand, the triviality of the canonical bundle is an essential condition for transversely holomorphic foliations.

These theorems can be compared with the following ones concerning deformations of the Godbillon--Vey class.
\newtheorem*{TheoremA'}{Theorem A'}
\begin{TheoremA'}[\cite{Maszczyk},~\cite{asuke:tohoku},~\cite{asuke:2015},~\cite{asuke:2017}]
Let $\CF$ be a transversely projective foliation.
Then the derivative of the Godbillon--Vey class with respect to any infinitesimal deformation of $\CF$ is trivial.
\end{TheoremA'}

\newtheorem*{TheoremB'}{Theorem B'}
\begin{TheoremB'}[Heitsch~\cite{12},~\cite{13}, cf.~\cite{asuke:GV}]
Let $\CF$ be a foliation of $M$.
The derivative of the Godbillon--Vey class with respect to any infinitesimal deformation of $\CF$ is trivial if $H^1(M;\Theta_\CF)=\{0\}$.
\end{TheoremB'}

Theorems A' and B' are also valid for transversely holomorphic foliations if we consider the Bott class instead of the Godbillon--Vey class.

We also show the following

\newtheorem*{TheoremC}{Theorem C}
\begin{TheoremC}
The FLK class admits continuous variations in the category of transversely holomorphic foliations.
\end{TheoremC}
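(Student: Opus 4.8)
The plan is to prove Theorem~C in the spirit of Thurston's proof that the Godbillon--Vey class varies continuously~\cite{Thurston}: to produce an explicit one-parameter family $\{\CF_t\}_{t\in(-\varepsilon,\varepsilon)}$ of transversely holomorphic foliations on a \emph{fixed} closed manifold $M$, together with a fixed homology class $c$ of $M$, such that the period function $t\mapsto\langle\FLK(\CF_t),c\rangle$ is continuous and non-constant. Since the parameter interval is connected, this forces the FLK class to assume a continuum of values, which is the content of ``admits continuous variations''.

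First I would take $\CF_0$ to be the transversely holomorphic foliation of Example~\ref{ex3.2}, whose FLK class is non-trivial because of the framing of its (trivial) canonical bundle, and set up a deformation of it. The most economical choice is a homogeneous model: $M=\vG\backslash G$ for a suitable Lie group $G$ and cocompact lattice $\vG$, with $\CF_0$ defined by a $G$-invariant transversely holomorphic structure, and with $\CF_t$ obtained by rotating the transverse structure inside a fixed complex-analytic model — equivalently, by following a path in the pseudogroup of germs of transverse automorphisms, or by scaling a parameter in the defining Haefliger cocycle. One then checks that each $\CF_t$ is again transversely holomorphic with trivial canonical bundle, that $t\mapsto\CF_t$ is real-analytic, and that the associated infinitesimal deformation $\dot\CF_0$ is a genuine (integrable) class in $H^1(M;\Theta_{\CF_0})$. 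This is the generalization of Example~\ref{ex3.2} promised in the abstract.

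Next I would compute $\FLK(\CF_t)$ explicitly. Because all the data are invariant, the relevant $\Bott$-type connection form and the framing can be written in terms of invariant forms on $G$, so the FLK cocycle descends to an invariant form $\omega_t$ on $M$ depending polynomially (or rationally) on $t$. Fixing once and for all the framing of the canonical bundle coming from the homogeneous structure — this matters, since in the transversely holomorphic setting the FLK class depends on the trivialization — I would evaluate $\int_c\omega_t$ and exhibit it as a non-constant function of $t$, for instance by showing its derivative at $t=0$ is non-zero, using the non-triviality already established for $\CF_0$. Continuity in $t$ is then immediate from the analytic dependence of $\omega_t$ on the parameter.

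The main obstacle is the simultaneous control of two competing requirements: the deformed foliations $\CF_t$ must remain transversely holomorphic on the \emph{same} manifold $M$, so that their FLK classes lie in the same group and can be compared, yet the deformation must be large enough to move the class. Realizing both typically relies on the rigidity of a homogeneous construction, as in Thurston's argument, and one must also verify that the observed variation is genuine, i.e. not an artifact of an implicitly varying trivialization of the canonical bundle; this is why it is essential to fix the framing from the homogeneous structure at the outset. I expect this bookkeeping, together with the explicit identification of the FLK form in terms of invariant forms on $G$, to be the technical heart of the proof; once it is in place, continuity and non-constancy of the period function follow formally.
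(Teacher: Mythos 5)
Your overall strategy -- exhibit an explicit one-parameter family of transversely holomorphic foliations on a fixed manifold, with a fixed framing family, whose FLK periods are continuous and non-constant, starting from Example~\ref{ex3.2} -- is the same as the paper's. But as written your proposal has a genuine gap at the decisive point, namely non-constancy. You propose to deform $\CF_0$ by some yet-to-be-constructed homogeneous mechanism ($M=\vG\backslash G$, rotating the transverse structure, invariant forms) and then to show the period function is non-constant ``for instance by showing its derivative at $t=0$ is non-zero, using the non-triviality already established for $\CF_0$.'' That last inference is a non sequitur: the non-vanishing of $\FLK(\CF_0)$ gives no information whatsoever about the derivative of $t\mapsto\FLK(\CF_t)$; a family could perfectly well have non-trivial but constant FLK class. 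Since neither the family nor the computation is actually specified, the proposal does not yet contain the idea that makes the theorem true.

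What the paper uses is already contained in Example~\ref{ex3.2} together with Proposition~\ref{prop3.1}, and no homogeneous-space machinery is needed. The fixed manifold is $S^3\times S^1$, the family is $\CF_\lambda^\circ=\pi^*\CF_\lambda$ (pull-backs of the linear foliations of $S^3$, with $\lambda$ itself the deformation parameter, so the infinitesimal deformations $\mu_\lambda$ come for free), and the framing is $e_m$, dual to $t^m\omega_\lambda$ with a fixed twist $m\neq0$ in the $S^1$ direction. Proposition~\ref{prop3.1} gives the closed formula
\[
\FLK_{\mu}(\CF_\lambda^\circ;e_m)=-m\,\pi^*\DBott_{\mu}(\CF_\lambda)\cup[S^1]
=-m\Bigl(1-\frac1{\lambda^2}\Bigr)[S^3]\cup[S^1],
\]
so the variation of the FLK class is driven by the known variation of the infinitesimal derivative of the Bott class combined with the twist $m$ of the trivialization; continuity and non-constancy in $\lambda$ are then read off directly, with the family $\dot\omega_\lambda$ and the trivializations $e_m$ required by the paper's definition of continuous variation supplied explicitly. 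If you intend your ``scaling a parameter in the defining Haefliger cocycle'' to mean varying $\lambda$, then carrying out your invariant-form computation would essentially reproduce this; but you must replace the appeal to non-triviality at $t=0$ by an actual formula (or derivative computation) such as the one above, and you must make explicit that the transversal twist $m\neq0$ of the framing is what converts the variation of $\DBott$ into a variation of $\FLK$.
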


The definition of continuous variations is slightly subtle when compared with that of classical secondary classes such as the Godbillon--Vey class.
See Section~3 for details.

\section{Definitions}
Let $M$ be a manifold without boundary and $\CF$ a foliation of $M$.
We assume that foliations are regular, namely, without singularities.

We recall some basic definitions in order to fix notations.
% The following kind of foliations will appear.
\begin{definition}
A partition $\CF$ of $M$ into injectively immersed submanifolds $\{L_\lambda\}$ is called a \textit{foliation} of codimension $q$ if there is an atlas $\{U_i\}$, called a \textit{foliation atlas}, of $M$ such that each $U_i$ is homeomorphic to $V_i\x T_i$, where $V_i\subset\R^p$ is an open set and $T_i\subset\R^q$ is an open ball, in a way such that each connected component of $L_\lambda\cap U_i$ is of the form $V_i\x\{t\}$, where $t\in T_i$.
The local diffeomorphisms on $\disj T_i$ induced by parallel translations along the leaves are called holonomy.
\end{definition}
If there is an foliation atlas such that $\disj T_i$ admits an orientation which is preserved under the holonomy, then $\CF$ is said to be \textit{transversely orientable}.
In what follows, we assume that foliations are transversely orientable.
This is always the case if the foliation is transversely holomorphic.
In general, a structure on $\disj T_i$ invariant under the holonomy is called a transverse structure of the foliation $\CF$.
We might need a suitable choice of a foliation atlas in order to obtain a transverse structure depending on the structure.
In this article, the following one is relevant.

\begin{definition}
Let $\CF$ be a foliation of real codimension $2q$ of a manifold $M$.
The foliation $\CF$ is said to be \textit{transversely holomorphic} of complex codimension $q$ if there is a foliation atlas $\{U_i\}$ such that there exists an embedding of $\disj T_i$ into $\C^q$ in a way such that the holonomy consists of holomorphic mappings.
\end{definition}

Foliations without transverse holomorphic structures are also called \textit{real foliations} in contrast to transversely holomorphic ones.
In this article, we will study both real and transversely holomorphic foliations.
In what follows, $q$ will denote the codimension of $\CF$ if $\CF$ is a real foliation, and the complex codimension of $\CF$ if $\CF$ is transversely holomorphic.
On a foliation chart, say $U_i$, the coordinates in the transversal direction, namely, those for $T_i$, are denoted by $(y^1_i,\ldots,y^q_i)$ or simply by $(y^1,\ldots,y^q)$.
We set $K=\R$ for real foliations, and $K=\C$ for transversely holomorphic foliations.

\begin{definition}
If $\CF$ is a real foliation, we set $E(\CF)=T\CF$ and $Q(\CF)=TM/E(\CF)$.
We call $Q(\CF)$ the \textit{normal bundle} of $\CF$.
If $\CF$ is transversely holomorphic, we define $E(\CF)$ to be the complex vector bundle locally spanned by $T\CF$ and $\pdif{}{\overline{y_1}},\ldots\pdif{}{\overline{y_q}}$.
We set $Q(\CF)=(TM\otimes\C)/E(\CF)$ and call it the \textit{complex normal bundle} of $\CF$.
\end{definition}
Note that we always have $Q(\CF)=(TM\otimes K)/E(\CF)$.
In this article, normal bundles of transversely holomorphic foliations as real foliations will not appear so that $Q(\CF)$ is always referred as a normal bundle.

\begin{definition}
We set $K_\CF=\bigwedge^qQ^*(\CF)$ and call it the \textit{canonical bundle} of the foliation $\CF$.
\end{definition}

\begin{remark}
If we work on real foliations, then the triviality of $K_\CF$ is equivalent to the transversal orientability of $\CF$.
On the other hand, if we work on transversely holomorphic foliations, then the triviality of $K_\CF$ is equivalent to the triviality the first Chern class of $Q(\CF)$.
It can be attained by considering an $S^1$-fibration over $M$.
% This fibration is not used in this article so that we omit the details.
% The relationship of the lifted foliations and the original foliation $\CF$ is less simple than the real~case.
\end{remark}

The following transverse structure is also relevant.

\begin{definition}
A foliation is said to be \textit{transversely projective} if there is a foliation atlas $\{U_i\}$ such that $\disj T_i$ admits a projective structure which is invariant under the holonomy.
When transversely holomorphic foliations are considered, then projective structures are understood to be complex projective structures.
\end{definition}
Note that projective structures are not necessarily flat.
If the projective structure is flat, we may assume that $\disj T_j$ can be embedded in a projective space and that the holonomy pseudogroup consists of projective transformations.

\begin{definition}
Let $\Omega^r(U)=\vG_U(\bigwedge^rT^*M)$ be the set of $K$-valued differential forms of class $C^\infty$ on an open subset $U$ of $M$.
If $E$ is a vector bundle over $M$, we denote by $\Omega^r(U;E)=\vG_U(E\otimes\bigwedge^rT^*M)$ the set of $E$-valued $r$-forms on $U$.
We denote by $I^*_k(U;E)$ the ideal of $\Omega^*(U;E)$ locally generated by $s\otimes dy^{i_1}\wedge\cdots\wedge dy^{i_k}$, where $i_1<\cdots<i_k$ and $s\in\vG_U(E)$.
% If $E$ is a trivial line bundle, then we denote $I^r_k(U;E)$ by $I^r_k(U)$.
We set $C_\CF^r(U;Q(\CF))=\Omega^r(U;Q(\CF))/I^r_1(U;Q(\CF))$.
\end{definition}
We naturally have $C_\CF^r(U;Q(\CF))\cong\vG_U(Q(\CF)\otimes\bigwedge^rE(\CF)^*)$.

\begin{definition}
\label{def1.8}
A connection $\nabla^b$ on $Q(\CF)$ is called a \textit{Bott connection} if $\nabla^b_XY=\pi[X,\widetilde{Y}]$ holds for $X\in\vG_U(E(\CF))$ and $Y\in\vG_U(Q(\CF))$, where $U$ is an open set, $\pi\colon TM\otimes K\to Q(\CF)$ is the projection, $\widetilde{Y}$ is a lift of $Y$ to $TM\otimes K$.
A connection $\mathcal{D}$ on $K_\CF^{-1}$ is called a Bott connection if $\mathcal{D}_XY=L_XY$ holds for $X\in\vG_U(E(\CF))$ and $Y\in\vG_U(K_\CF^{-1})$, where $L_X$ denotes the Lie derivative with respect to $X$.
\end{definition}
Bott connections induced on related bundles such as $Q^*(\CF)$ are also called Bott connections.
Note that the induced connection on $K_\CF^{-1}$ by a Bott connection on $Q(\CF)$ is a Bott connection in the sense of Definition~\ref{def1.8}.

\begin{definition}
\label{d_F}
Let $\nabla^b$ be a Bott connection on $Q(\CF)$.
Let $\{e_1,\ldots,e_q\}$ be a local trivialization of $Q(\CF)$ and $\tau$ the connection form of $\nabla^b$ with respect to $\{e_1,\ldots,e_q\}$.
If $c\in C_\CF^r(U;Q(\CF))$, then we denote by $d_\CF$ the covariant exterior derivative, namely, we locally represent $c=\sum\limits_{i}e_i\otimes c^i$ and set
\[
d_\CF\,c=\sum_{i}e_i\otimes\left(dc^i+\sum_j\tau^i_j\wedge c^j\right)\mod I^{r+1}_1(U;Q(\CF)).
\]
We denote by $H^*_\CF(M;Q(\CF))$ the (co)homology of $(C_\CF^*(M;Q(\CF)),d_\CF)$.
\end{definition}
% Note that if we choose $\{dy^1,\ldots,dy^q\}$ as a local trivialization, then $\tau=\Gamma$.
It is known that $d_\CF$ is well-defined, namely, independent of the choice of Bott connections and local trivializations, and that $(C_\CF^*(M;Q(\CF)),d_\CF)$ is a cochain complex (cf.~\cite{12},~\cite{DuchampKalka},~\cite{asuke:2015}*{Lemma~3.4}).

\begin{definition}[\cite{14}]
A vector field $X$ is said to \textit{preserve} $\CF$ of a \textit{$\varGamma$ vector field} if $[X,Y]\in\vG(E(\CF))$ for any $Y\in\vG(E(\CF))$.
We denote by $\Theta_\CF$ the sheaf of germs of vector fields which preserve $\CF$.
\end{definition}
The following is known.

\begin{theorem}[Heitsch~\cite{12}, Duchamp--Kalka~\cite{DuchampKalka}]
\label{resolution}
The complex $(C_\CF^*(M;Q(\CF)),d_\CF)$ is a resolution of\/ $\Theta_\CF$ so that $H_\CF^*(M;Q(\CF))\cong H^*(M;\Theta_\CF)$.
\end{theorem}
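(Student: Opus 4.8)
The plan is to show that the augmented complex
\[
0\longrightarrow\Theta_\CF\longrightarrow C_\CF^0(M;Q(\CF))\xrightarrow{\ d_\CF\ }C_\CF^1(M;Q(\CF))\xrightarrow{\ d_\CF\ }\cdots
\]
is exact as a complex of sheaves, and then that each sheaf $C_\CF^r(\,\cdot\,;Q(\CF))$ is fine (or at least acyclic), whence its global sections compute $H^*(M;\Theta_\CF)$. Recall the identification $C_\CF^r(U;Q(\CF))\cong\vG_U(Q(\CF)\otimes\bigwedge^rE(\CF)^*)$, so this is a sheaf of $C^\infty$-sections of a vector bundle; admitting partitions of unity subordinate to any open cover, it is fine, and the second part is immediate. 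Therefore the work is entirely in the local exactness of $d_\CF$.

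For local exactness I would work on a foliation chart $U\cong V\x T$ with $V\subset\R^p$, $T\subset K^q$, using a flat Bott connection in the product trivialization $\{e_1,\ldots,e_q\}$ of $Q(\CF)$ coming from $\partial/\partial y^1,\ldots,\partial/\partial y^q$; with this choice the connection form $\tau$ vanishes and $d_\CF$ is simply the leafwise de Rham differential applied coordinatewise to $c=\sum_i e_i\otimes c^i$ with each $c^i\in\Omega^r(U;K)/I^r_1(U;K)$, i.e.\ a form in the $V$-directions depending on all variables. The kernel of $d_\CF$ in degree $0$ is then $\{\sum_i e_i\otimes f^i : f^i$ locally constant along the leaves$\}$, and one checks directly from Definition~\ref{def1.8} that a section $\sum_i f^i\,\partial/\partial y^i$ of $Q(\CF)$ with $f^i$ leafwise constant lifts to a $\varGamma$ vector field (its bracket with any $X\in\vG(E(\CF))$ lands in $\vG(E(\CF))$), and conversely that the class in $Q(\CF)$ of any local $\varGamma$ vector field has leafwise-constant components; this identifies $\ker d_\CF^0$ with $\Theta_\CF$ and gives exactness at the augmentation and at degree~$0$. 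For degrees $r\ge 1$, exactness reduces to the leafwise Poincar\'e lemma with smooth dependence on the transverse parameters: a $d_\CF$-closed $c$ has $d_V c^i=0$ for each $i$, so on a contractible $V$ one solves $c^i=d_V b^i$ with $b^i$ smooth in all of $V\x T$ (the homotopy operator is integration along rays in $V$, which preserves smoothness in the $T$-variables), and then $b=\sum_i e_i\otimes b^i$ satisfies $d_\CF b=c$ modulo $I^{r+1}_1$, as required.

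The one genuine subtlety — and the step I expect to be the main obstacle to write carefully — is the passage between the two descriptions of $d_\CF$: Definition~\ref{d_F} builds $d_\CF$ from an \emph{arbitrary} Bott connection and an arbitrary local trivialization, while the computation above uses a very special flat one, so I must invoke the already-cited well-definedness of $d_\CF$ (independence of these choices, from \cite{12}, \cite{DuchampKalka}, \cite{asuke:2015}*{Lemma~3.4}) to reduce to the convenient gauge. A secondary point, in the transversely holomorphic case, is that the ``leafwise'' directions spanning $E(\CF)$ include the $\partial/\partial\overline{y^j}$, so the relevant Poincar\'e lemma is a mixed real/$\bar\partial$-type lemma along the leaves; this still holds — on a polydisc the Dolbeault–Poincar\'e lemma with $C^\infty$ parameters applies — but it is where one must be attentive that ``leafwise constant'' means genuinely locally constant (holomorphic and antiholomorphic in $y$), matching the sections of $Q(\CF)$ that underlie $\varGamma$ vector fields. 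Once these two points are handled, exactness in every degree follows and the spectral-sequence/abstract-de-Rham argument gives $H_\CF^*(M;Q(\CF))\cong H^*(M;\Theta_\CF)$.
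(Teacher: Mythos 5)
Your proposal is essentially correct and follows exactly the approach of the sources the paper cites for this statement (the paper itself gives no proof of Theorem~\ref{resolution}, quoting Heitsch~\cite{12} and Duchamp--Kalka~\cite{DuchampKalka}, whose argument is precisely your fine-resolution plus leafwise Poincar\'e/Dolbeault-lemma-with-parameters scheme, with the gauge reduction to $\tau=0$ justified by the cited well-definedness of $d_\CF$). Two points to state carefully when writing it up: for the augmentation to be injective one must read $\Theta_\CF$ as the image of the foliation-preserving fields in $\vG(Q(\CF))$ (i.e.\ modulo fields lying in $E(\CF)$), which is the convention of \cite{12} and \cite{DuchampKalka}; and in the transversely holomorphic case the degree-zero kernel consists of normal fields whose components are constant along the leaves and \emph{holomorphic} in the transverse coordinates $y$ (annihilated by $\partial/\partial\overline{y^j}$), not ``genuinely locally constant'' as your parenthetical suggests.
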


\begin{definition}[Heitsch~\cite{12}, Duchamp--Kalka~\cite{DuchampKalka}]
Elements of $H^1(M;\Theta_\CF)$ are called \textit{infinitesimal deformations} of $\CF$.
\end{definition}

Let $\sigma\in H^1(M;\Theta_\CF)$ be an infinitesimal deformation of $\CF$.
We fix a representative, say $\dot{\omega}$, of $\sigma$ which is a $d_{\CF}$-closed one-form valued in $Q(\CF)$.
We choose a local trivialization $\{e_1,\ldots,e_q\}$ of $Q(\CF)$ and let $\omega$ be its dual.
Then, $\dot{\omega}$ is locally represented as $\dot{\omega}=\sum\limits_ie_i\otimes\dot{\omega}^i$, where $\dot{\omega}^i$ are one-forms.
We regard $\omega$ and $\dot{\omega}$ as $K^q$-valued one-forms.
If we denote by $\tau$ the connection form of $\nabla^b$ with respect to $\{e_1,\ldots,e_q\}$ as in Definition~\ref{d_F}, then, we can find a $\gl_q(K)$-valued one-form $\dot{\tau}$ such that
\[
d\dot{\omega}+\tau\wedge\dot{\omega}+\dot{\tau}\wedge\omega=0.
\]
If the local trivialization is changed from $e$ to $e'=eP$, where $P$ is a $\GL_q(K)$-valued function, then $\dot{\tau}$ is changed into $P^{-1}\dot{\tau}P$.
Therefore, if we set $\dot{\theta}=\tr\dot{\tau}$, then the one-form $\dot{\theta}$ is globally well-defined.
On the other hand, $\nabla^b$ induces a connection, say $\mathcal{D}$, on $K_\CF^{-1}=\bigwedge^qQ(\CF)$.
The connection form of $\mathcal{D}$ with respect to $e_1\wedge\cdots\wedge e_q$ is equal to $\tr\tau$.
Assume that $K_\CF^{-1}$ is trivial and fix a trivialization, say $e$.
Then the connection form of $\mathcal{D}$ with respect to $e$ is a globally well-defined one-form, which we denote by $\theta$.
Indeed, $\theta$ is given by the difference of the connection $\mathcal{D}$ and the flat connection with respect to $e$.
We denote $\theta$ by $\theta^e$ if we clarify $e$.
The one-form $\dot{\theta}$ can be regarded as an infinitesimal deformation of $\theta$ with respect to $\dot{\omega}$.
In addition, we have $d\theta=d\tr\tau$, which is independent~of~$e$.
% Even if $K_\CF^{-1}$ is non-trivial, the two-form $d\tr\tau$ is globally-well defined, which we denote also by $d\theta$.

\begin{definition}
Assume that $K_\CF^{-1}$ is trivial, and let $\theta$ and $d\theta$ be as~above.
We set $c=-1/2\pi$ for real foliations and $c=-1/2\pi\sqrt{-1}$ for transversely holomorphic foliations.
The class in $H^{2q+1}(M;K)$ represented by $c^{q+1}\theta\wedge(d\theta)^q$ is called the \textit{Godbillon--Vey class} of $\CF$ if $\CF$ is a real foliation, the \textit{Bott class} of $\CF$ if $\CF$ is a transversely holomorphic foliation.
\end{definition}

\begin{remark}
The Bott class can be defined as an element of $H^{2q+1}(M;\C/\Z)$ even if $K_\CF^{-1}$ is non-trivial.
\end{remark}

\begin{definition}
Let $\mu\in H^1(M;\Theta_\CF)$.
Let $\dot{\omega}$ be a representative of $\mu$ and we define $\theta$, $\dot{\theta}$, $d\theta$ as~above.
We set $c=-1/2\pi$ for real foliations and $c=-1/2\pi\sqrt{-1}$ for transversely holomorphic foliations.
\begin{enumerate}
\item
The class in $H^{2q+1}(M;K)$ represented by $c^{q+1}\dot{\theta}\wedge(d\theta)^q$ is called the \textit{infinitesimal derivative} with respect to $\mu$ of the Godbillon--Vey class if $\CF$ is real, of the Bott class if $\CF$ is transversely holomorphic.
We denote thus defined class by $\DGV_{\mu}(\CF)$ in the real case, $\DBott_{\mu}(\CF)$ in the transversely holomorphic case.
\item
Suppose that the canonical bundle $K_\CF$ of $\CF$ is trivial.
If $\CF$ is transversely holomorphic, then we fix the homotopy class of a trivialization, say $e$.
The class in $H^{2q+2}(M;K)$ represented by $c^{q+2}\dot{\theta}\wedge\theta\wedge(d\theta)^q$ is called the \textit{Fuks--Lodder--Kotschick class} (FLK class for short) with respect~to~$\mu$, and denoted by $\FLK_\mu(\CF)$.
If we emphasize the trivialization, then we denote $\FLK_\mu(\CF)$ also by $\FLK_{\mu}(\CF;e)$.
\end{enumerate}
\end{definition}
Note that $\mathrm{DBott}_\mu(\CF)$ is well-defined even if $K_\CF^{-1}$ is non-trivial.

The following is known.
\begin{theorem}[\cite{Fuks}*{Chapter~3, Section~1.5}, \cite{Lodder}, \cite{Kotschick}, \cite{asuke:GV}]
The FLK class is independent of the choices in the real case, while it depends on the homotopy class of trivializations of the canonical bundle in the transversely holomorphic case.
\end{theorem}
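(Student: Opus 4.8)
The plan is to establish three things: that the form $c^{q+2}\dot\theta\wedge\theta\wedge(d\theta)^q$ is closed, so that it represents a class in $H^{2q+2}(M;K)$; that this class is unchanged if one varies the representative $\dot\omega$ of $\mu$, the Bott connection, or the local trivializations of $Q(\CF)$; and that replacing the trivialization $e$ of $K_\CF$ by another one adds a term which is exact in the real case but, in the transversely holomorphic case, depends only on the homotopy class of the comparison map. \emph{Closedness.} The single identity needed is $d\dot\theta\wedge(d\theta)^q=0$, and I would derive it by realizing $\mu$ over the dual numbers $R=K[\epsilon]/(\epsilon^2)$: the condition $d_\CF\dot\omega=0$ says exactly that $\CF$ extends to a codimension-$q$ foliation $\CF_\epsilon$ over $R$, which, after trivializing $K_\CF$, is cut out by a decomposable $q$-form $\Omega_\epsilon=\Omega+\epsilon\dot\Omega$ satisfying $d\Omega_\epsilon=\theta_\epsilon\wedge\Omega_\epsilon$ with $\theta_\epsilon=\theta+\epsilon\dot\theta$. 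As for any foliation, $d^2\Omega_\epsilon=0$ forces $d\theta_\epsilon\wedge\Omega_\epsilon=0$, so $d\theta_\epsilon$ lies in the ideal generated by the $q$ one-forms $\omega^i_\epsilon$, whence $(d\theta_\epsilon)^{q+1}=0$; the coefficient of $\epsilon^0$ is the familiar $(d\theta)^{q+1}=0$ that makes the Bott/Godbillon--Vey form closed, and the coefficient of $\epsilon$ gives the required $d\dot\theta\wedge(d\theta)^q=0$. Since $d(\theta\wedge(d\theta)^q)=(d\theta)^{q+1}=0$, it follows that $d\bigl(\dot\theta\wedge\theta\wedge(d\theta)^q\bigr)=d\dot\theta\wedge\theta\wedge(d\theta)^q=\theta\wedge\bigl(d\dot\theta\wedge(d\theta)^q\bigr)=0$; the same identities show $\dot\theta\wedge(d\theta)^q$ is closed, which is used below.

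\emph{Independence of the remaining choices other than $e$.} A change of local frame $e\mapsto eP$ of $Q(\CF)$ replaces $\dot\tau$ by $P^{-1}\dot\tau P$, hence leaves $\dot\theta=\tr\dot\tau$, $\theta$ and $d\theta$ unchanged. For the Bott connection and the representative $\dot\omega$ I would run the usual transgression argument for secondary classes. Two Bott connections differ by an $\mathrm{End}(Q(\CF))$-valued one-form lying in the transverse ideal $I^1_1$; replacing $\dot\omega$ by $\dot\omega+d_\CF\xi$ with $\xi\in\vG(Q(\CF))$ modifies $\dot\theta$ by $dg+\rho$ with $g$ a smooth function and $\rho\in I^1_1$, the correction to $\dot\tau$ being obtained from $(\dot\tau'-\dot\tau)\wedge\omega=-R^b\xi$ and Bott's vanishing $R^b\in I^2_1$. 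Since any product carrying more than $q$ transverse differentials vanishes, $\rho\wedge\theta\wedge(d\theta)^q=0$, whereas $dg\wedge\theta\wedge(d\theta)^q=d\bigl(g\,\theta\wedge(d\theta)^q\bigr)$ is exact; hence the class does not change. (Alternatively, all of this is subsumed in the remark that $c^{q+2}\dot\theta\wedge\theta\wedge(d\theta)^q$ is the image of a fixed cocycle under a characteristic homomorphism out of the DGA of deformations of~\cite{asuke:2015-2}, connection-independence of such homomorphisms being classical; cf.~\cite{asuke:GV}.) Thus $\FLK_\mu(\CF;e)$ is well defined once $e$ is fixed.

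\emph{The effect of $e$.} Since $\theta^e$ is the difference of the Bott connection $\mathcal D$ on $K_\CF^{-1}$ and the flat connection determined by $e$, replacing $e$ by $e'=fe$ with $f\colon M\to K^\x$ smooth gives $\theta^{e'}=\theta^e\pm df/f$, while $\dot\theta$ and $d\theta$ do not involve $e$. Hence $\FLK_\mu(\CF;e')-\FLK_\mu(\CF;e)$ is represented by $\pm c^{q+2}\dot\theta\wedge(df/f)\wedge(d\theta)^q$, which is closed because $d\dot\theta\wedge(d\theta)^q=0$, $d(df/f)=0$ and $d(d\theta)^q=0$. If $\CF$ is real, $f$ has constant sign on each component of $M$, so $df/f=d\log\norm f$ and, $\dot\theta\wedge(d\theta)^q$ being closed, the difference equals $\pm\,c^{q+2}\,d\bigl(\log\norm f\cdot\dot\theta\wedge(d\theta)^q\bigr)$ --- an exact form; so the FLK class of a real foliation is independent of $e$, hence of all choices. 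If $\CF$ is transversely holomorphic, $f\colon M\to\C^\x$ and $df/f$ is a closed one-form representing $2\pi\sqrt{-1}$ times the image in $H^1(M;\C)$ of the homotopy class $[f]\in[M,\C^\x]\cong H^1(M;\Z)$; therefore the difference depends only on that homotopy class, and in particular vanishes when $e$ and $e'$ are homotopic (then $f$ is null-homotopic, $df/f$ is globally exact, and the difference is exact). So the FLK class of a transversely holomorphic foliation is well defined once a homotopy class of trivialization of $K_\CF$ has been fixed, and it genuinely depends on that class, as Example~\ref{ex3.2} and its generalization in Section~3 show.

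\emph{The main obstacle.} Conceptually everything is short: closedness is the $\epsilon$-coefficient of $(d\theta_\epsilon)^{q+1}=0$, and the $e$-dependence rests on $\theta^{e'}-\theta^e=\pm df/f$ together with $[df/f]$ being a $2\pi\sqrt{-1}$-multiple of an integral class. The actual work is the second step --- pinning down precisely how $\dot\omega\mapsto\dot\omega+d_\CF\xi$ and a change of Bott connection act on $\dot\theta$, and keeping the transverse ideals $I^*_k$ straight throughout --- which is routine but is exactly what the cited references carry out.
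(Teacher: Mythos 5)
The statement you are proving is not proved in the paper at all: it is quoted as known, with references to Fuks, Lodder, Kotschick and \cite{asuke:GV}, so there is no in-paper argument to compare against. Your outline follows the same standard route as those references, and its first and third steps are essentially sound: the dual-number derivation of $d\dot\theta\wedge(d\theta)^q=0$ works (the division lemma and $(d\theta_\epsilon)^{q+1}=0$ do survive over $K[\epsilon]/(\epsilon^2)$), and the analysis of the trivialization, $\theta^{fe}=\theta^e+df/f$ with the real case exact via $d\log\norm f$ and the holomorphic case depending only on $[df/f]\in 2\pi\sqrt{-1}\,H^1(M;\Z)$, with genuine dependence witnessed by Proposition~\ref{prop3.1} and Example~\ref{ex3.2}, is exactly how the $e$-dependence is understood in the paper. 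One caveat in step one: the identification $d\Omega_\epsilon=\theta_\epsilon\wedge\Omega_\epsilon$ with the $\epsilon$-part of $\theta_\epsilon$ equal to the paper's $\dot\theta=\tr\dot\tau$ requires the Bott connection to be transversely torsion-free with respect to the chosen coframe; for a general Bott connection the torsion $T=d\omega+\tau\wedge\omega\in I^2_2$ contributes, at order $\epsilon$, terms proportional to $\dot\omega^l\wedge\Omega$ which are not absorbed into $\tr\tau_\epsilon\wedge\Omega_\epsilon$. This is harmless (such connections exist, and the discrepancy is eventually killed by the same ideal arguments), but it should be said.

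The genuine misstep is in your second step. From $(\dot\tau'-\dot\tau)\wedge\omega=-R^b\xi$ and Bott vanishing $R^b\in I^2_1$ one does \emph{not} get that $\dot\theta$ changes by $dg+\rho$ with $\rho\in I^1_1$: writing $R^i_j=\sum_k\rho^i_{jk}\wedge\omega^k$ and solving, the change of $\tr\dot\tau$ is a contraction of the curvature with $\xi$, which modulo $I^1_1$ equals $\iota_{\widetilde\xi}\,d\theta$ (for a transversely torsion-free Bott connection; no exact term appears at all), and $\iota_{\widetilde\xi}\,d\theta$ is in general neither closed nor in $I^1_1$. So the dichotomy you rely on --- ``$dg$-part is exact, $I^1_1$-part dies against $(d\theta)^q$'' --- does not apply as written, and this is precisely the step that carries the content of the well-definedness theorem. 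The repair uses a tool you already have: since $(d\theta)^{q+1}=0$, contraction gives $\iota_{\widetilde\xi}(d\theta)\wedge(d\theta)^q=\frac1{q+1}\iota_{\widetilde\xi}\bigl((d\theta)^{q+1}\bigr)=0$ and likewise $\iota_{\widetilde\xi}(d\theta)\wedge\theta\wedge(d\theta)^q=0$, while the genuine $I^1_1$-ambiguities (e.g.\ from the non-uniqueness of $\dot\tau$, which you do not mention) die against $(d\theta)^q\in I^{2q}_q$. A similar caution applies to the change of Bott connection: although $\tau'-\tau\in I^1_1\otimes\gl_q$, the induced change of $\dot\theta$ involves terms of the form $\sum s^i_{ji}\dot\omega^j$ with nontrivial leafwise part, so it is not disposed of merely by ``more than $q$ transverse differentials vanish''; this bookkeeping is exactly what the cited references carry out, and deferring it is reasonable, but the specific mechanism you propose for it would not go through literally.
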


\section{Proofs of Theorems A and B}
The argument heavily depends on~\cite{asuke:2015}.
We will mostly follow the notations in~\cite{asuke:2015}*{Section~4}.
Let $\nabla^b$ be a Bott connection on $Q(\CF)$ and $\mathcal{D}$ the induced connection on $K_\CF^{-1}=\bigwedge^qQ(\CF)$.
% If $e$ is a trivialization of $K_\CF^{-1}$, then we denote by $\nabla^e$ the flat connection with respect to $e$.
If $e$ is a trivialization of $K_\CF^{-1}$, then we denote by $\theta^e$ the connection form of $\mathcal{D}$ with respect to $e$.
Note that $\theta^e$ is globally well-defined.
Note also that $d\theta^e$ is independent of the choice of $e$ once $\mathcal{D}$ is fixed, and moreover that $d\theta^e\in I_1^2(M;K)$.
We refer the reader to~\cite{BGJ:LNM279} for details.

\begin{lemma}
\label{lem2.1}
We have a well-defined mapping $\theta^e\wedge\colon H^r(M;\Theta_{\CF})\to H^{r+1}(M;\Theta_\CF)$ which depends on the homotopy class of the trivialization $e$ of $K_\CF^{-1}$ but not on $\mathcal{D}$.
\end{lemma}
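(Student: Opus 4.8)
The plan is to exhibit $\theta^e\wedge$ as a genuine cochain map on the resolution $(C_\CF^*(M;Q(\CF)),d_\CF)$ and to analyse its dependence on the data. First I would observe that $\theta^e$ is a globally well-defined $K$-valued one-form, by the discussion preceding the lemma, so the operation $c\mapsto \theta^e\wedge c$ makes sense on $\Omega^*(M;Q(\CF))$ and descends to $C_\CF^*(M;Q(\CF))$ since wedging with a one-form carries $I^r_1$ into $I^{r+1}_1$. The key algebraic point is the Leibniz-type identity $d_\CF(\theta^e\wedge c)=(d\theta^e)\wedge c-\theta^e\wedge d_\CF c$; since $d\theta^e\in I_1^2(M;K)$, the term $(d\theta^e)\wedge c$ lies in $I^{r+1}_1(M;Q(\CF))$ and hence vanishes in $C_\CF^{r+1}(M;Q(\CF))$. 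Therefore $d_\CF(\theta^e\wedge c)=-\theta^e\wedge d_\CF c$ in the quotient complex, so $\theta^e\wedge$ (up to the harmless sign) is a cochain endomorphism of degree $+1$, and it induces $\theta^e\wedge\colon H^r_\CF(M;Q(\CF))\to H^{r+1}_\CF(M;Q(\CF))$. Composing with the isomorphism $H^r_\CF(M;Q(\CF))\cong H^r(M;\Theta_\CF)$ of Theorem~\ref{resolution} gives the asserted map.

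Next I would verify independence of the choice of Bott connection $\mathcal{D}$. If $\mathcal{D}'$ is another Bott connection on $K_\CF^{-1}$ and $\theta'^e$ the corresponding connection form relative to the same trivialization $e$, then $\alpha:=\theta'^e-\theta^e$ is a globally defined one-form; since both connections are Bott connections, $\alpha$ annihilates $E(\CF)$, i.e. $\alpha\in I^1_1(M;K)$. Consequently $\alpha\wedge c\in I^{r+1}_1(M;Q(\CF))$ for every $c$, so $\theta'^e\wedge$ and $\theta^e\wedge$ agree already at the cochain level in $C^*_\CF(M;Q(\CF))$, a fortiori on cohomology.

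Finally I would treat the dependence on $e$. Changing the trivialization from $e$ to $e'=fe$ with $f$ a nowhere-vanishing $K$-valued function replaces $\theta^e$ by $\theta^{e'}=\theta^e+f^{-1}df$ (for $K=\R$, $f>0$, or one passes to $f\in K^\times$). Thus the two induced maps on cohomology differ by wedging with the closed one-form $f^{-1}df$, whose class in $H^1(M;K)$ is $2\pi\sqrt{-1}$ times (an integer class, in the holomorphic case) and in particular depends only on the homotopy class of $e$; homotopic trivializations give the same class $[f^{-1}df]$, so the induced map $\theta^e\wedge$ on $H^*(M;\Theta_\CF)$ depends only on that homotopy class. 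I expect the main obstacle to be bookkeeping the identification $H^*_\CF(M;Q(\CF))\cong H^*(M;\Theta_\CF)$ compatibly with $\theta^e\wedge$ — one must check that $\theta^e\wedge$ is defined intrinsically on the complex $C^*_\CF$ and not merely after choosing the quasi-isomorphism — but this is handled by the cochain-level statement above, which makes the whole argument functorial.
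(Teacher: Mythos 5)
Your argument is correct and follows essentially the same route as the paper: the Leibniz identity together with $d\theta^e\in I_1^2(M;K)$ gives the (anti-)chain map on $(C^*_\CF(M;Q(\CF)),d_\CF)$, the difference of two Bott connection forms lies in $I_1^1$, and a homotopy $e'\simeq e$ makes $f^{-1}df$ exact. The one step you leave implicit --- that wedging with the exact form $f^{-1}df=d\log f$ induces the zero map on $H^*_\CF(M;Q(\CF))$, so that the induced map really depends only on the de Rham class of the difference form --- is exactly the paper's computation $d_\CF\bigl((\log f)c\bigr)=d\log f\wedge c$ modulo $I_1^{r+1}$ for cocycles $c$, a one-line Leibniz check in your framework.
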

\begin{proof}
Let $\{e_1,\ldots,e_q\}$ be a local trivialization of $Q(\CF)$ on an open set, say $U$.
% as in Definition~\ref{d_F}.
Let $\mu\in H^r(M;\Theta_\CF)$ and $c\in C_\CF^r(M;Q(\CF))$ a representative of $\mu$.
We represent $c$ as $c=\sum\limits_ie_i\otimes c^i$ on $U$.
As $c$ is a cocycle, by shrinking $U$ if necessary, there exists an element $\alpha=\sum_ie_i\otimes\alpha^i$ of $I_1^{r+1}(U;Q(\CF))$ such that
\[
\sum_ie_i\otimes\left(dc^i+\sum_j\tau^i_j\wedge c^j\right)=\sum_ie_i\otimes\alpha^i.
\]
We have
\begin{align*}
&\hphantom{{}={}}%
d_\CF(\theta^e\wedge c)\\*
&=\sum_ie_i\otimes\left(d(\theta^e\wedge c^i)+\sum_j\tau^i_j\wedge(\theta^e\wedge c^j)\right)\\*
&=\sum_ie_i\otimes\left(d\theta^e\wedge c^i-\theta^e\wedge dc^i+\sum_j\tau^i_j\wedge(\theta^e\wedge c^j)\right)\\*
&=\sum_ie_i\otimes\left(d\theta^e\wedge c^i-\theta^e\wedge\alpha^i\right).
\end{align*}
As $d\theta^e\in I_1^2(U;K)$, $\theta^e\wedge c$ is $d_\CF$-closed.
Suppose moreover that $c=d_\CF f$ holds for some $f\in C_\CF^{r-1}(M;Q(\CF))$.
We locally represent $f$ as $f=\sum\limits_ie_i\otimes f^i$.
Then we have
\[
c^i=\sum_ie_i\otimes\left(df^i+\sum_j\tau^i_j\wedge f^j\right)+\sum_ie_i\otimes\alpha^i
\]
for some $\{\alpha^1,\ldots,\alpha^q\}\in I_1^r(U;K)$.
We have
\begin{align*}
&\hphantom{{}={}}\theta^e\wedge c\\*
&=\sum_ie_i\otimes\left(\theta^e\wedge df^i+\sum_j\theta^e\wedge\tau^i_j\wedge f^j\right)+\sum_ie_i\otimes\theta^e\wedge\alpha^i\\*
&=-\sum_ie_i\otimes\left(d(\theta^e\wedge f^i)+\sum_j\tau^i_j\wedge(\theta^e\wedge f^j)\right)+\sum_ie_i\otimes d\theta^e\wedge f^i+\sum_ie_i\otimes\theta^e\wedge\alpha^i.
\end{align*}
Note that $\{\sum_ie_i\otimes\theta^e\wedge f^i\}$ gives rise to an element $\theta^e\wedge f\in C^r(M;Q(\CF))$.
Again as $d\theta^e\in I_1^2(U;K)$, we have $\theta^e\wedge c=d_\CF(-\theta^e\wedge f)$.
Thus $\theta^e\wedge\mu$ is well-defined.
Let $\mathcal{D}'$ also be a Bott connection on $K_\CF^{-1}$ and $\theta'{}^e$ the connection form of $\mathcal{D}'$ with respect to $e$.
Then, $\theta'{}^e-\theta^e\in I^1_1(M;Q(\CF))$ so that we have $\theta^e\wedge=\theta'{}^e\wedge$.
Finally we replace the trivialization of $K_\CF^{-1}$ by $\widehat{e}=fe$, where $f$ is a nowhere zero function.
If we assume that $\widehat{e}$ is homotopic to $e$, then we can choose a well-defined branch $\log f$ of the logarithm of $f$.
On the other hand, the connection form of $\mathcal{D}$ with respect to $\widehat{e}$ is given by $\theta^e+\dfrac{df}{f}=\theta^e+d\log f$.
We locally have
\begin{align*}
d((\log f)c)+\tau\wedge((\log f)c)%
&=(d\log f)\wedge c+(\log f)(dc+\tau\wedge c)\\*
&=(d\log f)\wedge c+(\log f)\alpha,
\end{align*}
where $\alpha\in I_1^{r+1}(U;Q(\CF))$.
Therefore, $\theta^e\wedge\mu\in H^{r+1}(M;\Theta_\CF)$ is independent of the homotopy class of the trivialization of~$K_\CF^{-1}$.
\end{proof}

\begin{remark}
It suffices to assume that the connection $\mathcal{D}$ is a Bott connection, not necessarily induced by $\nabla^b$ in Lemma~\ref{lem2.1}.
\end{remark}

\begin{remark}
The homotopy type of the trivialization of $K_\CF^{-1}$ is unique in the real case once the transverse orientation is fixed.
\end{remark}

\begin{remark}
We retain the notations in the proof Lemma~\ref{lem2.1}.
Let $\{U_i\}$ be a locally finite simple covering of $M$ such that each $U_i$ is contained in a foliation chart.
If $e$ is a trivialization of $K_\CF^{-1}$, then $e$ is represented as $f_i\pdif{}{y^1_i}\wedge\cdots\wedge\pdif{}{y^q_i}$ on $U_i$, where $f_i$ is a non-vanishing function on $U_i$.
If we denote by $\theta^e_i$ the restriction of $\theta^e$ to $U_i$, then $\theta^e_i-d\log f_i\in I_1^1(U_i;Q(\CF))$, where we fix a branch of $\log f_i$ if $\CF$ is transversely holomorphic.
Therefore, $\theta^e\wedge c=d\log f_i\wedge c$ holds on $U_i$ modulo $I_1^r(U_i;Q(\CF))$.
On the other hand, we have
\[
d_\CF(\log f_ic)=d\log f_i\wedge c+\theta\wedge(\log f_ic)+\log f_idc=d\log f_i\wedge c
\]
on $U_i$ modulo $I_1^{r+1}(U_i;Q(\CF))$.
Continuing in this way, we can show that if $\mu\in H^1(M;\Theta_\CF)$ is represented by a family $\{X_{ij}\}$ of foliated vector fields, namely, each $X_{ij}$ is a vector field on $U_i\cap U_j$ locally constant along the leaves and in addition transversely holomorphic if $\CF$ is so, then there is a family $\{m_{ij}\}$ of integers, which reflects the homotopy type of $e$, such that $\theta^e\wedge\mu$ is represented by $(\log\det D\gamma_{ij}+2\pi\sqrt{-1}m_{ij})X_{jk}$ up to signature depending on conventions, where $(y^1_j,\ldots,y^q_j)=\gamma_{ji}(y^1_i,\ldots,y^q_i)$ and we fix a branch of $\log\det D\gamma_{ij}$.
Note that we may assume that $m_{ij}=0$ if $\CF$ is a real foliation.
This shows that $\theta^e\wedge\mu$ is different from the primary obstruction of Kodaira~\cite{Kodaira} for deformations.
Indeed, Proposition~\ref{prop3.1} and Example~\ref{ex3.2} show that there is a family $\{\CF_\lambda\}$ of transversely holomorphic foliations such that $\mathrm{DBott}_\mu(\CF_\lambda)$ and $\FLK_{\mu}(\CF_\lambda;e_\lambda)$ varies together under variation of $\lambda$.
In particular, $\theta^e\wedge\mu$ should be non-trivial.
Hence $\theta^e\wedge\mu$ does not correspond to the obstruction for $\mu$.
\end{remark}

The following is known.
\begin{proposition}[\cite{asuke:2015}*{Propositions~4.7,~4.12}]
\label{prop3.4}
There is a well-defined mapping $\mathcal{L}_P\colon H_\CF^r(M;Q(\CF))\to H^{2q+r}(M)$ which depends on the equivalent class of transversely torsion-free Bott connections on $Q(\CF)$.
If $\CF$ is transversely projective, then $\mathcal{L}_P=0$.
\end{proposition}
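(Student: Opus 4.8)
The plan is to recall the construction of $\mathcal{L}_P$ from~\cite{asuke:2015} as a Chern--Weil-type recipe, to establish its well-definedness by a transgression argument resting on Bott vanishing, and to deduce the vanishing for transversely projective $\CF$ by the same mechanism that proves Theorem~A$'$ (which is exactly the case $r=1$). Concretely: fix a transversely torsion-free Bott connection $\nabla^b$ on $Q(\CF)$; over a foliation chart take a local frame $\{e_1,\dots,e_q\}$ of $Q(\CF)$, let $\tau$ be the connection form and $\omega=(\omega^1,\dots,\omega^q)$ the dual transverse $1$-forms, so that transverse torsion-freeness reads $d\omega^i+\sum_j\tau^i_j\wedge\omega^j=0$, while the curvature $d\theta=d\tr\tau$ of the induced connection $\mathcal{D}$ on $K_\CF^{-1}$ (with local connection form $\theta=\tr\tau$) lies in $I^2_1(M;K)$ by Bott vanishing. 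Given a $d_\CF$-closed $c=\sum_ie_i\otimes c^i\in C^r_\CF(M;Q(\CF))$, the relation $dc^i+\sum_j\tau^i_j\wedge c^j\in I^{r+1}_1$ lets one choose (since $I^{r+1}_1$ is generated by the $\omega^j$) a $\gl_q(K)$-valued $r$-form $\dot\tau_c$, unique modulo $I^r_1$, with
\[
dc^i+\sum_j\tau^i_j\wedge c^j+\sum_j(\dot\tau_c)^i_j\wedge\omega^j=0.
\]
Under a change of local frame $\dot\tau_c$ is conjugated modulo $I^r_1$, so $\dot\theta_c:=\tr\dot\tau_c$ is a global $r$-form, well defined modulo $I^r_1(M;K)$, and one sets $\mathcal{L}_P([c])$ to be the class of $\dot\theta_c\wedge(d\theta)^q$ (up to a universal constant). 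For $r=1$ this is $\DGV_\mu(\CF)$ (resp.\ $\DBott_\mu(\CF)$), so that $\FLK_\mu(\CF;e)$ equals, up to sign and constant, $\theta^e\wedge\mathcal{L}_P(\mu)$.

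Well-definedness rests throughout on Bott vanishing: $I^*_1$ is an ideal, and a product of more than $q$ of its elements vanishes on forms because the transverse codimension is $q$; in particular $(d\theta)^q$ carries exactly $q$ transverse factors. First, differentiating the defining equation for $\dot\tau_c$ once more and using $d\theta\in I^2_1$ together with the Bott vanishing of the curvature $d\tau+\tau\wedge\tau$ shows that $d\bigl(\dot\theta_c\wedge(d\theta)^q\bigr)$ lands in $(I^*_1)^{q+1}=0$, so the form is closed; for $r=1$ this is precisely the computation making $\DGV_\mu(\CF)$ (resp.\ $\DBott_\mu(\CF)$) well defined. Second, replacing $\dot\tau_c$ by another admissible solution, or replacing $c$ by $c+d_\CF f$, changes $\dot\theta_c$ by an element of $I^r_1(M;K)$, and wedging with $(d\theta)^q$ then annihilates it, respectively produces an exact form. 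Third, two transversely torsion-free Bott connections in the same equivalence class have connection forms differing by an element of $I^1_1$, and a Chern--Simons transgression along the segment joining them --- once again with every error term forced into $(I^*_1)^{q+1}$ --- shows the two resulting forms are cohomologous, whereas inequivalent choices may genuinely change the class. The main technical burden here is the bookkeeping: tracking every term produced by the differentiations until it is visibly of transverse degree $q+1$.

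For the vanishing when $\CF$ is transversely projective, the plan is to feed the construction the torsion-free Bott connection induced by the transverse projective structure, obtained from its normal projective Cartan connection (which exists whether or not the structure is flat). For this connection the forms $(\theta,d\theta,\dot\theta_c)$ satisfy the structure equations of a projective Cartan connection, and these are precisely what, in the case $r=1$, makes the derivative of the Godbillon--Vey (resp.\ Bott) class exact --- Theorem~A$'$. Running that argument with an $r$-cocycle $c$ in place of an infinitesimal deformation exhibits $\dot\theta_c\wedge(d\theta)^q$ as $d$ of an explicit form, so $\mathcal{L}_P=0$. The hard part is making this primitive global: the chart-wise Chern--Simons-type primitives have to patch, and it is here that one uses the transverse structure being \emph{projective} rather than merely affine, since a projective connection supplies exactly the additional $1$-forms needed to absorb the obstruction that otherwise persists. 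Since the statement is literally \cite{asuke:2015}*{Propositions~4.7 and~4.12}, one may alternatively simply invoke that reference.
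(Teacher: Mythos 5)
There is a genuine gap, and it sits exactly where the actual content of the cited result lies. The paper itself does not reprove this proposition: it invokes \cite{asuke:2015}*{Propositions~4.7 and~4.12} and recalls the construction your argument never uses, namely that $\mathcal{L}_P([c])$ is represented by $d(N\wedge c)\wedge(d\theta)^{q-1}$ with $N_i=df_i-\frac{1}{q+1}\sum_j f_if_j\,dy^j$, the local datum of a transverse Thomas--Whitehead (projective Cartan) connection built from the transversely torsion-free Bott connection; the vanishing in the transversely projective case is then a consequence of the existence of a holonomy-invariant transverse TW-connection (Theorem~\ref{thm1.5}). Your substitute recipe $\mathcal{L}_P([c])=[\dot\theta_c\wedge(d\theta)^q]$ is a different construction, identified with the paper's map only by an unproved assertion in the case $r=1$ (and it is needed in the $N$-form in Lemma~\ref{thm3.3}); and precisely the two substantive claims are left as admitted placeholders: the closedness/independence ``bookkeeping'' and, above all, the projective vanishing, where you concede that ``the hard part is making this primitive global'' and that the projective structure ``supplies exactly the additional $1$-forms needed.'' That patching device \emph{is} the transverse TW-connection, i.e.\ the whole content of \cite{asuke:2015}, and it is never constructed. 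The appeal to ``the same mechanism that proves Theorem~A$'$'' is also backwards relative to the cited source: in \cite{asuke:2015} the $r=1$ statement is deduced from this very proposition, not conversely. (Minor but symptomatic: ``$\FLK_\mu(\CF;e)=\theta^e\wedge\mathcal{L}_P(\mu)$'' is ill-formed, since $\mathcal{L}_P(\mu)$ lives in $H^{2q+1}(M)$; the correct interface is Lemma~\ref{thm3.3} applied to $\theta^e\wedge\mu$ from Lemma~\ref{lem2.1}.)

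Moreover, the mechanisms you do describe are insufficient or inconsistent as stated. For closedness, differentiating your defining equation only controls $\sum_j\bigl(d\dot\tau+[\tau,\dot\tau]\bigr)^i_j\wedge\omega^j$ in terms of the Bott curvature; this does not determine $\tr d\dot\tau=d\dot\theta_c$ modulo $I^{r+1}_1$, so ``everything lands in a $(q+1)$-fold product of $I_1$'' does not follow for $r\ge 2$ without further input. For the dependence on the connection: \emph{any} two Bott connections on $Q(\CF)$ have connection forms differing by an element of $I^1_1$ (the Bott condition fixes covariant derivatives along $E(\CF)$), so your third step, if valid, would prove independence of all Bott connections, contradicting both your own remark that inequivalent choices may change the class and the statement's dependence on the projective equivalence class of transversely torsion-free Bott connections---a notion you never define or exploit. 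Concretely, under $\tau\mapsto\tau+A$ with $A^i_j=\sum_kA^i_{jk}\omega^k$, the form $\dot\theta_c$ changes by terms of the shape $\pm\sum_{i,j}A^i_{ji}c^j$, which are not in $I^r_1$, so wedging with $(d\theta)^q$ does not absorb them. The only step that stands is your last sentence---citing \cite{asuke:2015}---which is in fact all the paper does; everything in between is an outline of a different construction whose key verifications are missing.
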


The mapping $\mathcal{L}_P$ is given as follows.
Let $U$ be a foliation chart and $y=(y^1,\ldots,y^q)$ the coordinates in the transversal direction.
If we choose $\pdif{}{y^1}\wedge\ldots\wedge\pdif{}{y^q}$ as a local trivialization of $K_{\CF}^{-1}$, then, $\mathcal{D}$ is locally represented by $\theta=f_1dy^1+\cdots+f_qdy^q$, where $f_1,\ldots,f_q$ are functions on $U$.
Let
\[
N_i=df_i-\sum_j\frac1{q+1}f_if_jdy^j.
\]
Then $\mathcal{L}_P(\mu)$, where $\mu=[c]$, is represented by
\[
d(N\wedge c)\wedge(d\theta)^{q-1}
\]
which is a globally well-defined $(2q+r)$-form~\cite{asuke:2015}*{Definition~4.6, Proposition~4.7}.

The mapping $\mathcal{L}_P$ is derived from a kind of Cartan connection called a transverse TW-connection, where `TW' stands for `Thomas--Whitehead' (cf.~\cite{Roberts}).
A transverse TW-connection is a linear connection on $Q(\widetilde{\CF})$, where $\widetilde{\CF}$ is a lift of $\CF$ to the associated principal bundle with $K_\CF^{-1}$ (see~\cite{asuke:2015}, also~\cite{Kobayashi}).
We can show the following

\begin{theorem}[\cite{asuke:2015}*{Definition~2.1 and Theorem~2.3}]
\label{thm1.5}
A foliation is transversely projective if there is a transverse TW-connection on $Q(\widetilde{\CF})$ invariant under the holonomy.
\end{theorem}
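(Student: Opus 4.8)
The plan is to read Theorem~\ref{thm1.5} as the transverse incarnation of the classical Thomas--Whitehead correspondence, which in the form needed is~\cite{asuke:2015}*{Theorem~2.3}: a $q$-dimensional manifold $N$ carries a projective structure (a projective equivalence class of torsion-free linear connections) if and only if the total space of the principal $K^*$-bundle associated with $\bigwedge^qTN$ carries a TW-connection --- a torsion-free connection for which the fundamental vector field of the $K^*$-action is parallel up to a constant and whose Ricci tensor is normalized so that the connection descends to a projective class on $N$ --- and this correspondence is natural with respect to local diffeomorphisms of $N$. Since $\CF$ is transversely projective exactly when $\disj T_i$ admits a holonomy-invariant projective structure, and since, by~\cite{asuke:2015}*{Definition~2.1}, a transverse TW-connection on $Q(\widetilde{\CF})$ is a Bott connection whose restriction to each transversal of $\widetilde{\CF}$ --- the total space of the principal $K^*$-bundle over $T_i$ determined by $K_\CF^{-1}$ --- is such a TW-connection, the statement reduces to transporting this bijection across the holonomy pseudogroup.

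For the direction asserted, I would start from a holonomy-invariant transverse TW-connection $\widehat{\nabla}$ on $Q(\widetilde{\CF})$, restrict it to the transversal of $\widetilde{\CF}$ lying over a foliation chart $U_i$, and invoke~\cite{asuke:2015}*{Theorem~2.3} to obtain a projective structure on $T_i$. For an element of the holonomy of $\widetilde{\CF}$ defined over $U_i\cap U_j$, the two restricted TW-connections are related by the induced local diffeomorphism, so by naturality of the Thomas--Whitehead correspondence the projective structures on $T_i$ and $T_j$ agree on the overlap; hence $\disj T_i$ carries a holonomy-invariant projective structure and $\CF$ is transversely projective. The converse --- which is the version actually used through Proposition~\ref{prop3.4} --- runs the same argument backwards: from a holonomy-invariant projective structure on $\disj T_i$ one forms, over each $T_i$, the Thomas connection associated with any local representative in the projective class; the classical fact that it depends only on the class shows that the resulting TW-connections agree on overlaps and are holonomy-invariant, hence define a transverse TW-connection on $Q(\widetilde{\CF})$.

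The step I expect to require the most care is the passage between a transversal of $\CF$ and the one-dimension-larger transversal of $\widetilde{\CF}$ sitting over it: one must check that the foliation $\widetilde{\CF}$ on the principal bundle associated with $K_\CF^{-1}$ has holonomy precisely the lift, through the $K^*$-action, of the holonomy of $\CF$, so that ``invariant under the holonomy'' has a compatible meaning upstairs and downstairs and so that the Bott-connection hypothesis is respected by the lift. A second subtlety is that projective structures are not assumed flat, so there is no developing map available; the identification of projective structures with TW-connections and its naturality must be used throughout at the level of connections, exploiting that the defining conditions of a TW-connection (vanishing torsion together with the two normalizations) are intrinsic and hence preserved under the holonomy-related identifications. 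Once these bookkeeping points are in place, Theorem~\ref{thm1.5} follows from~\cite{asuke:2015} essentially formally.
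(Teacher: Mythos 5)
The paper does not prove Theorem~\ref{thm1.5} itself; it imports it from \cite{asuke:2015}, whose content is exactly the transverse adaptation of the Thomas--Whitehead/Roberts correspondence that you describe. Your reconstruction --- restricting a holonomy-invariant transverse TW-connection to the local transversals of $\widetilde{\CF}$, invoking the classical correspondence between TW-connections on the volume bundle and projective structures, and using its naturality under the (lifted) holonomy to glue the projective structures on $\disj T_i$, with the reverse construction for the converse --- is correct and essentially the same approach as the cited source.
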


\begin{lemma}[cf.~\cite{asuke:2015}*{Lemma~4.10, Theorem~4.11}]
\label{thm3.3}
We have
\[
\mathcal{L}_P(\theta^e\wedge[\dot{\omega}])=\frac1q\dot{\theta}\wedge\theta^e\wedge(d\theta)^q.
\]
\end{lemma}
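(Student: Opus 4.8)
The plan is to compute both sides of the asserted identity using the explicit description of $\mathcal{L}_P$ given above the statement of Lemma~\ref{thm3.3}, and to reconcile them modulo $d_\CF$-exact terms and elements of the ideal $I_1^*$. First I would work on a foliation chart $U$ with transverse coordinates $y=(y^1,\ldots,y^q)$, take $\pdif{}{y^1}\wedge\cdots\wedge\pdif{}{y^q}$ as the local trivialization of $K_\CF^{-1}$, and write $\theta=\sum_j f_j\,dy^j$ for the connection form of $\mathcal{D}$ there, with $N_i=df_i-\sum_j\frac{1}{q+1}f_if_j\,dy^j$. By Lemma~\ref{lem2.1} the class $\theta^e\wedge[\dot\omega]$ is represented by $\theta^e\wedge\dot\omega$, which on $U$ is $\sum_i e_i\otimes(\theta\wedge\dot\omega^i)$; applying the formula for $\mathcal{L}_P$, the left-hand side is represented by $d\bigl(N\wedge(\theta\wedge\dot\omega)\bigr)\wedge(d\theta)^{q-1}$, where $N\wedge c$ denotes the contraction pairing $N_i$ against the $e_i$-component of $c$ as in \cite{asuke:2015}. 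Expanding $d(N\wedge(\theta\wedge\dot\omega))=dN\wedge(\theta\wedge\dot\omega)\pm N\wedge d(\theta\wedge\dot\omega)$ and using $d\theta\wedge(d\theta)^{q-1}=(d\theta)^q$ together with $d\theta\in I_1^2$, many terms should collapse.

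The key structural facts I would exploit are: (i) the defining relation $d\dot\omega+\tau\wedge\dot\omega+\dot\tau\wedge\omega=0$ for $\dot\tau$, whence on $U$ one gets $d\dot\omega^i$ expressed through $\tau$, $\dot\tau$, and $\dot\theta=\tr\dot\tau$; (ii) the fact that $d\theta$, being in $I_1^2$, is a $2$-form purely in the $dy^j$'s, so wedging with it kills any $dy^j$-factor already present — this forces the surviving part of $N\wedge(\theta\wedge\dot\omega)$ to be the piece where $\dot\omega$ contributes its non-$dy$ (i.e. leafwise) component; (iii) the trace identities relating $N$, $\theta$, and $\dot\theta$: since the relevant combination of $N_i$'s against $\dot\tau$ should produce exactly $\dot\theta$, and $\theta^e$ times the leftover yields $\theta^e\wedge(d\theta)^q$ after using $d\theta=\sum dN_j\wedge dy^j$-type rewriting modulo lower-ideal terms. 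The factor $1/q$ should emerge from the $(q+1)^{-1}$ in the definition of $N_i$ combined with counting how many of the $q$ coordinate directions in $(d\theta)^{q-1}$ can be filled, exactly as in the proof of \cite{asuke:2015}*{Theorem~4.11}.

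Concretely, the steps in order are: (1) reduce to a chart and write everything in terms of $f_j$, $\tau$, $\dot\tau$, $\theta$, $\dot\theta$; (2) substitute $\theta^e\wedge\dot\omega$ into the $\mathcal{L}_P$-formula and expand the outer $d$; (3) discard all terms containing a repeated $dy^j$ or otherwise lying in $I_1^{2q+2}(U;K)$ once wedged with $(d\theta)^{q-1}$ — in particular the $\frac{1}{q+1}f_if_j\,dy^j$ part of $N_i$ drops except where it feeds the trace; (4) use the $\dot\tau$-equation to replace $d\dot\omega^i$ and collect the $\dot\tau$-contribution into $\dot\theta=\tr\dot\tau$; (5) recognize the resulting globally defined form as $\frac1q\,\dot\theta\wedge\theta^e\wedge(d\theta)^q$, checking that the coefficient comes out right; (6) note independence of the chart and of $\mathcal{D}$ by invoking Proposition~\ref{prop3.4} and Lemma~\ref{lem2.1}, so the chartwise computation gives the global identity in $H^{2q+1}(M)$.

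The main obstacle I expect is step (4) combined with the bookkeeping in step (3): tracking precisely which terms from $dN\wedge(\theta\wedge\dot\omega)$ and $N\wedge d(\theta\wedge\dot\omega)$ survive wedging with $(d\theta)^{q-1}$, and verifying that the $\dot\tau$-dependent terms assemble into the \emph{trace} $\dot\theta$ rather than some other contraction of $\dot\tau$ — this is where the transversely-torsion-free hypothesis on the Bott connection and the symmetry it imposes on $\tau$ (hence on the interplay between $N_i$ and $\dot\tau^i_j$) must be used carefully. A secondary subtlety is ensuring that the branch choices for $\theta^e$ (the $m_{ij}$ in the earlier remark) do not affect the identity, which follows because $d\theta^e$ and $d_\CF(\theta^e\wedge-)$ are insensitive to them; I would remark on this rather than belabor it.
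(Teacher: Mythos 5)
Your plan is correct and is essentially the paper's own argument: both represent $\mathcal{L}_P(\theta^e\wedge[\dot{\omega}])$ by $d(N\wedge\theta^e\wedge\dot{\omega})\wedge(d\theta)^{q-1}$ on a chart, expand the exterior derivative, kill the unwanted terms using the fact that $(d\theta)^q=q!\,df_1\wedge\cdots\wedge df_q\wedge dy^1\wedge\cdots\wedge dy^q$ already exhausts the transverse directions, and obtain the factor $1/q$ from the computation of \cite{asuke:2015}*{Lemma~4.10, Theorem~4.11}. The only difference is one of economy: the paper first moves $\theta^e$ outside (so the surviving term is literally $\theta^e$ wedged with the known representative of $\mathcal{L}_P([\dot{\omega}])$ and the other term dies since every $df_i$ is absorbed by $(d\theta)^q$), whereas you propose to redo that trace bookkeeping with $\dot{\tau}$ by hand, which would work but reproduces the cited computation.
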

\begin{proof}
The class $\mathcal{L}_P(\theta^e\wedge[\dot{\omega}])$ is represented by $d(N\wedge\theta^e\wedge\dot{\omega})\wedge(d\theta)^{q-1}$.
We~have
\begin{align*}
d(N\wedge\theta^e\wedge\dot{\omega})\wedge(d\theta)^{q-1}%
&=-d\theta\wedge(N\wedge\dot{\omega})\wedge(d\theta)^{q-1}+\theta^e\wedge d(N\wedge\dot{\omega})\wedge(d\theta^{q-1})\\*
&=-\left(\sum_idf_i\wedge\dot{\omega}^i\right)\wedge(d\theta)^q-\frac1q\theta^e\wedge\dot{\theta}\wedge(d\theta)^q\\*
&=-\frac1q\theta^e\wedge\dot{\theta}\wedge(d\theta)^q,
\end{align*}
because we have $(d\theta)^q=q!df_1\wedge\cdots\wedge df_q\wedge dy^1\wedge\cdots\wedge dy^q$.
\end{proof}

Theorem A now follows from Proposition~\ref{prop3.4} and Lemma~\ref{thm3.3}.
Theorem B follows from Lemma~\ref{thm3.3}, because $\theta^e\wedge[\dot{\omega}]$ should be trivial for any $e$.

\section{Non-triviality of the FLK class}
It is difficult to find an example of a real foliation with non-trivial FLK class~\cite{Kotschick}.
On the other hand, there is a following simple construction to yield non-trivial examples in the transversely holomorphic setting.
Let $\CF$ be a transversely holomorphic foliation of complex codimension $q$.
Suppose that the canonical bundle $K_\CF$ is trivial and let $\omega$ be a trivialization.
We denote by $e$ the trivialization of $K_\CF^{-1}$ dual to $\omega$.
Note that $\omega$ defines $\CF$ in the sense that we have $E(\CF)=\ker\omega=\{X\in TM\otimes\C\mid\iota_X\omega=0\}$.
Let $M^\circ=M\x S^1$, where $S^1$ is considered as the unit circle in $\C$ and $t$ will denote the natural coordinates.
We denote by $\pi\colon M\x S^1\to M$ the projection.
Let $\CF^\circ$ be the pull-back of $\CF$ to $M^\circ$ of which the leaves are of the form $L\x S^1$, where $L$ is a leaf of $\CF$.
Then, the canonical bundle of $\CF^\circ$ is also trivial.
Indeed, we can consider a trivialization $t^m\omega$ of $K_{\CF^\circ}$, where $m\in\Z$.
We denote by $e_m$ the trivialization of $K_{\CF^\circ}^{-1}$ dual to $t^m\omega$.
As $\omega$ determines $\CF$, we have a $\C$-valued one-form $\eta$ such that $d\omega=-\eta\wedge\omega$.
If we~set
\[
\eta^\circ=\eta-m\frac{dt}{t},
\]
then we have $d(t^m\omega)=-\eta^\circ\wedge(t^m\omega)$, where we omit $\pi^*$ if it is apparent.
There is a Bott connection on $K_\CF^{-1}$ of which the connection form with respect to $e$ is equal to $\eta$.
Similarly, there is a Bott connection on $K_{\CF^\circ}^{-1}$ of which the connection form with respect to $e_m$ is equal to $\eta^\circ$.
Suppose that $\CF$ admits a differentiable one-parameter family $\{\CF_\mu\}$ with $\CF_0=\CF$, where $0$ is the base point.
If we represent the derivatives at $\mu=0$ by adding dots, then $(-1/2\pi\sqrt{-1})^{q+1}\dot{\eta^\circ}\wedge(d\eta^\circ)^q$ and $(-1/2\pi\sqrt{-1})^{q+2}\dot{\eta^\circ}\wedge\eta^\circ\wedge(d\eta^\circ)^q$ represent $\DBott_\mu(\CF^\circ)$ and $\FLK_\mu(\CF^\circ;e_m)$, respectively.
If $\FLK_\mu(\CF;e)$ is non-trivial, then the foliation is what we look for.
On the other hand, we have the following proposition which generalizes~\cite{asuke:GV}*{Example~5.11}.

\begin{proposition}
\label{prop3.1}
Suppose that $\FLK_\mu(\CF;e)$ is trivial in $H^{2q+2}(M;\C)$.
Suppose in addition that $\DBott_\mu(\CF)$ is non-trivial in $H^{2q+1}(M;\C)$.
Then, the FLK class $\FLK_{\mu^\circ}(\CF^\circ;e_m)$ is non-trivial in $H^{2q+2}(M\x S^1;\C)$ if $m\neq0$, where $\mu^\circ=\pi^*\mu$.
More precisely, we have $\pi_!\FLK_{\mu^\circ}(\CF^\circ;e_m)=-m\DBott_\mu(\CF)$, where $\pi_!$ denotes the integration along the fiber.
In particular, $H^2(M\x S^1;\Theta_{\CF^\circ})\neq\{0\}$.
\end{proposition}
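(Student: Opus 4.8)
The plan is to represent $\FLK_{\mu^\circ}(\CF^\circ;e_m)$ by an explicit $(2q+2)$-form built from data on $M$, to push it down along $\pi$ and identify the push-forward with a multiple of $\DBott_\mu(\CF)$, and finally to deduce the non-vanishing of $H^2(M\x S^1;\Theta_{\CF^\circ})$ from the identity between the FLK class and $\mathcal{L}_P$ provided by Lemma~\ref{thm3.3}. First I would note that, since $dt/t$ is closed and independent of the deformation parameter, $d\eta^\circ=d\eta$ and $\dot{\eta^\circ}=\dot\eta$, where $\eta$ and $\dot\eta$ denote the pull-backs of the corresponding forms on $M$ and $c^{q+1}\dot\eta\wedge(d\eta)^q$ represents $\DBott_\mu(\CF)$. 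Hence the given representative of $\FLK_{\mu^\circ}(\CF^\circ;e_m)$ splits as
\[
c^{q+2}\dot{\eta^\circ}\wedge\eta^\circ\wedge(d\eta^\circ)^q=c^{q+2}\dot\eta\wedge\eta\wedge(d\eta)^q-mc^{q+2}\dot\eta\wedge\tfrac{dt}{t}\wedge(d\eta)^q.
\]
The first summand is $\pi^*$ of the chosen representative of $\FLK_\mu(\CF;e)$, which is exact by hypothesis; therefore $\FLK_{\mu^\circ}(\CF^\circ;e_m)$ is represented by $-m\,c^{q+2}\,\dot\eta\wedge\tfrac{dt}{t}\wedge(d\eta)^q$.

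Next I would integrate along the fiber $S^1$. A sign-chase moving the fiber form $dt/t$ past the even-degree form $(d\eta)^q$, the normalization $\int_{S^1}dt/t=2\pi\sqrt{-1}$, and the identity $2\pi\sqrt{-1}\,c=-1$ together evaluate the push-forward as
\[
\pi_!\FLK_{\mu^\circ}(\CF^\circ;e_m)=-m\,c^{q+1}\,[\dot\eta\wedge(d\eta)^q]=-m\,\DBott_\mu(\CF),
\]
the sign being fixed by the orientation convention for $\pi_!$. Since $m\neq0$ and $\DBott_\mu(\CF)\neq0$ by hypothesis, this class is non-zero, and hence $\FLK_{\mu^\circ}(\CF^\circ;e_m)\neq0$ in $H^{2q+2}(M\x S^1;\C)$.

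For the remaining assertion I would use that $\mathcal{L}_P$ is defined on $M\x S^1$ as well: pulling back a transversely torsion-free Bott connection from $Q(\CF)$ yields one on $Q(\CF^\circ)$ whose induced connection form on $K_{\CF^\circ}^{-1}$ with respect to $e_m$ is precisely $\eta^\circ=\theta^{e_m}$. By Lemma~\ref{thm3.3} and the definition of the FLK class,
\[
\FLK_{\mu^\circ}(\CF^\circ;e_m)=q\,c^{q+2}\,\mathcal{L}_P\bigl(\theta^{e_m}\wedge[\dot{\omega^\circ}]\bigr),
\]
where $\dot{\omega^\circ}=\pi^*\dot\omega$ represents $\mu^\circ$ and, by Lemma~\ref{lem2.1}, $\theta^{e_m}\wedge[\dot{\omega^\circ}]\in H^2(M\x S^1;\Theta_{\CF^\circ})$. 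If $H^2(M\x S^1;\Theta_{\CF^\circ})$ were trivial, this class would vanish, forcing $\FLK_{\mu^\circ}(\CF^\circ;e_m)=0$ and contradicting the previous step; therefore $H^2(M\x S^1;\Theta_{\CF^\circ})\neq\{0\}$. The step I expect to be the main obstacle is the sign bookkeeping in the fiber integration: combining the power $c^{q+2}$, the normalization of $dt/t$ over $S^1$, and the Koszul and orientation signs in $\pi_!$ so that the push-forward is exactly $-m\,\DBott_\mu(\CF)$ rather than $+m\,\DBott_\mu(\CF)$; the rest is formal once the identifications $d\eta^\circ=d\eta$, $\dot{\eta^\circ}=\dot\eta$ are made and Lemmas~\ref{lem2.1} and~\ref{thm3.3} are invoked.
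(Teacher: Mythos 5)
Your proposal is correct and follows essentially the same route as the paper: the same decomposition $\dot{\eta^\circ}\wedge\eta^\circ\wedge(d\eta^\circ)^q=\dot\eta\wedge\eta\wedge(d\eta)^q+m\frac{dt}{t}\wedge\dot\eta\wedge(d\eta)^q$, followed by fiber integration to get $\pi_!\FLK_{\mu^\circ}(\CF^\circ;e_m)=-m\DBott_\mu(\CF)\neq0$. For the last assertion the paper simply cites Theorem~B, whose proof is exactly the combination of Lemmas~\ref{lem2.1} and~\ref{thm3.3} that you spell out, so this is the same argument in expanded form.
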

\begin{proof}
We have
\begin{align*}
\dot{\eta^\circ}\wedge\eta^\circ\wedge(d\eta^\circ)^q%
&=\dot{\eta}\wedge\left(\eta-m\frac{dt}{t}\right)\wedge(d\eta)^q\\*
&=\dot{\eta}\wedge\eta\wedge(d\eta)^q+m\frac{dt}{t}\wedge\dot{\eta}\wedge(d\eta)^q.
\end{align*}
By multiplying $(-1/2\pi\sqrt{-1})^{q+2}$ on the both hand sides, we obtain
\[
\FLK_{\mu^\circ}(\CF^\circ;e_m)=\pi^*\FLK_{\mu}(\CF;e)-m\pi^*\DBott_\mu(\CF)\wedge\mathrm{vol}_{S^1}.
\]
The first part of the proposition follows from this equality.
The second claim follows from Theorem~B.
\end{proof}

\begin{example}[\cite{asuke:GV}*{Example~5.11}, see also~\cite{BGJ:LNM279}*{pp.~74--76}]
\label{ex3.2}
Let $\CF_\lambda$ be a foliation of $S^3$ given by the one-form
\[
\lambda_2z_2dz_1-\lambda_1z_1dz_2,
\]
where $S^3$ is considered to be the unit sphere in $\C^2$ of which the coordinates are given by $(z_1,z_2)$, and $\lambda_1,\lambda_2$ are non-zero complex numbers such that the ratio $\lambda=\lambda_1/\lambda_2$ is not a negative real number.
It is well-known that $\mathrm{Bott}(\CF_\lambda)=\left(\lambda+2+\frac1{\lambda}\right)[S^3]$, where $[S^3]$ denotes the standard generator of $H^3(S^3;\C)$.
On the other hand, the FLK class is trivial regardless trivializations of $Q(\CF)$ because it is of degree $4$.
If we denote by $\mu$ the element of $H^1(M;\Theta_{\mathcal{F_\lambda}})$ which corresponds to the variation of $\lambda$, then $\DBott_{\mu}(\CF_\lambda)=\left(1-\frac1{\lambda^2}\right)[S^3]$.
Let
\[
\omega_m=t^m(\lambda_2z_2\,dz_1-\lambda_1z_1\,dz_2)
\]
be a trivialization of $K_{\CF_\lambda^\circ}=Q^*(\CF_\lambda^\circ)$, where $t$ denotes the standard coordinates of $S^1$ considered as the unit circle in $\C$.
We denote $e_m$ the trivialization of $K_{\CF_\lambda^\circ}^{-1}$ dual to $\omega_m$.
Then, by Proposition~\ref{prop3.1}, $\FLK_{\mu}(\CF_\lambda^\circ;e_m)=-m\pi^*\DBott_{\mu}(\CF_\lambda)\cup[S^1]$, where $[S^1]$ denotes the image of the standard generator of $H^1(S^1;\C)$ in $H^*(S^3\x S^1;\C)$.
It is easy to generalize this construction to obtain an example of arbitrary complex codimension.
\end{example}

\begin{remark}
The FLK classes for examples of Heitsch~\cite{14} can be shown trivial.
For example, in Example~1 of~\cite{14}, real foliation of codimension $2q-1$ are constructed on fiber bundles $F\to E\to B$ with $B$ being a product of a $q$-tuple of surfaces and $F$ the $2q$-dimensional Euclidean space equipped with vector fields of the form $\lambda_1y^1\pdif{}{y^1}+\cdots+\lambda_{2q}y^{2q}\pdif{}{y^{2q}}$, where $\lambda_{2i-1}=\lambda_{2i}$.
The foliations are spanned by a kind of lifts of $B$ and the vector fields, and are essentially on the $S^{2q-1}$-bundle over $B$.
The Godbillon--Vey class of these foliations are of degree $4q-1$ and highly non-trivial, while classes of degree $4q$ should be trivial.
Example~2 of~\cite{14} also yields a trivial example, although computations become involved.
\end{remark}

We can consider continuous deformations of the FLK class.
We should be aware that $H^1(M;\Theta_{\CF_\lambda})$ is not necessarily constant.
We propose the following.

\begin{definition}
Let $\{\CF_\lambda\}$ be a differentiable family of foliations of a fixed manifold $M$ with parameter $\lambda$.
We assume that the codimension of $\CF_\lambda$ is constant and that transverse holomorphic structure also varies differentiably if each $\CF_\lambda$ is transversely holomorphic.
Assume in addition that there exist a differentiable family of $1$-forms $\dot\omega_\lambda$ such that each $\dot\omega_\lambda$ is $d_{\CF_\lambda}$-closed, and a differentiable family $\{\widetilde{e}_\lambda\}$ of sections to $\bigwedge^qTM\otimes K$ such that $\widetilde{e}_\lambda$ induces a trivialization, say $e_\lambda$, of $K_{\CF_\lambda}^{-1}$ for each $\lambda$.
If $\FLK_{\mu_\lambda}(\CF_\lambda;e_\lambda)$ varies continuously in $H^{2q+2}(M;K)$ which is a vector space, then we say that the FLK class admits a \textit{continuous variation}.
\end{definition}

Theorem~C in the introduction now follows from Example~\ref{ex3.2}.
We do not know if Theorem~C is valid for real foliations.

\begin{bibdiv}
\begin{biblist}[\resetbiblist{99}]
\bib{asuke:tohoku}{article}{
author		={Asuke, Taro},
title		={Infinitesimal derivative of the Bott class and the Schwarzian derivatives},
journal		={Tohoku Math. J.},
volume		={61},
date		={2009},
pages		={393--416}
}
\bib{asuke:GV}{book}{
author		={Asuke, Taro},
title		={Godbillon--Vey class of transversely holomorphic foliations},
series		={MSJ memoirs},
volume		={24},
date		={June 2010}
}
\bib{asuke:2015}{article}{
author		={Asuke, Taro},
title		={Transverse projective structures of foliations and infinitesimal derivatives of the Godbillon--Vey class},
journal		={Internat. J. Math.},
volume		={26},
date		={2015},
pages		={1540001, 29pp}
}
\bib{asuke:2015-2}{article}{
author		={Asuke, Taro},
title		={Derivatives of secondary classes and $2$-normal bundles of foliations},
journal		={J. Math. Sci. Univ. Tokyo},
volume		={22},
date		={2015},
pages		={893--937}
}
\bib{asuke:2017}{article}{
author		={Asuke, Taro},
title		={Notes on `Infinitesimal derivative of the Bott class and the Schwarzian derivatives'},
journal		={Tohoku Math. J.},
volume		={69},
date		={2017},
pages		={129--139}
}
\bib{BGJ:LNM279}{book}{
author		={Bott, Raoul},
author		={Gitler, Samuel},
author		={James, Ioan M.},
title		={Lectures on Algebraic and Differential Topology},
publisher	={Springer-Verlag},
series		={Lecture Notes in Math., Vol. 279},
address		={Berlin-New York},
date		={1972}
}
\bib{DuchampKalka}{article}{
author		={Duchamp, Thomas E.},
author		={Kalka, Morris},
title		={Deformation Theory for Holomorphic foliations},
journal		={J. Differential Geometry},
volume		={14},
date		={1979},
pages		={317--337}
}
\bib{Fuks}{book}{
author		={Fuks, D. B.},
title		={Cohomology of infinite-dimensional Lie algebras},
note		={Translated from the Russian by A. B. Sosinski\u\i},
series		={Contemporary Soviet Mathematics},
publisher	={Consultants Bureau},
address		={New York},
date		={1986}
}
\bib{12}{article}{
author		={Heitsch, James},
title		={A cohomology for foliated manifolds},
journal		={Comment. Math. Helv.},
volume		={15},
date		={1975},
pages		={197--218}
}
\bib{13}{article}{
author		={Heitsch, James},
title		={Derivatives of secondary characteristic classes},
journal		={J. Differential Geometry},
volume		={13},
date		={1978},
pages		={311--339}
}
\bib{14}{article}{
author		={Heitsch, James},
title		={Independent variation of secondary classes},
journal		={Ann. Math.},
volume		={108},
date		={1978},
pages		={421--460}
}
\bib{Kobayashi}{book}{
author		={Kobayashi, S.},
title		={Transformation Groups in Differential Geometry},
publisher	={Springer-Verlag},
address		={Heidelberg--New York},
date		={1972}
}
\bib{Kodaira}{book}{
author		={Kodaira, Kunihiko},
title		={Complex Manifolds and Deformation of Complex Structures},
publisher	={Springer-Verlag},
address		={Berlin--Heiderberg},
date		={2005},
note		={Reprint of the 1986 edition of the Grundlehren der mathematischen Wissenschaften \textbf{283}}
}
\bib{Kotschick}{article}{
author		={Kotschick, D.},
title		={Godbillon--Vey invariants for families of foliations},
book		={
	title		={Symplectic and contact topology: interactions and perspectives (Toronto, ON/Montreal, QC, 2001)},
	series		={Fields Inst. Commun. \textbf{35}},
	publisher	={Amer. Math. Soc.},
	address		={Providence, RI},
	date		={2003}
},
pages		={131--144}
}
\bib{Lodder}{article}{
author		={Lodder, Jerry M.},
title		={Rigidity of secondary characteristic classes},
journal		={Differential Geom. Appl.},
volume		={12},
date		={2000},
pages		={207--218}
}
\bib{Maszczyk}{article}{
author		={Maszczyk, T.},
title		={Foliations with rigid Godbillon-Vey class},
journal		={Math. Z.},
volume		={230},
%number		={2},
date		={1999},
pages		={329--344}
}
\bib{Roberts}{article}{
author		={Roberts, C.~W.},
title		={The projective connections of T.~Y.~Thomas and J.~H.~C.~Whitehead applied to invariant connections},
journal		={Differ.~Geom.~Appl.},
volume		={5},
date		={1995},
pages		={237--255}
}
\bib{Thurston}{article}{
author		={Thurston, William},
title		={Noncobordant foliations of $S^3$},
journal		={Bull.~Amer.~Math.~Soc.},
volume		={78},
date		={1972},
pages		={511--514}
}
\end{biblist}
\end{bibdiv}
\end{document}